\newcommand{\rn}{\mathbb R}
\newcommand{\rnnnn}{\mathbb R^3}
\newcommand{\rnnn}{\mathbb R^{n+1}}
\newcommand{\sn}{ {\mathbb{S}^{n}}}
\newcommand{\N}{\mathbb N}
\newcommand{\psum}{{+_{\negthinspace\kern-2pt p}}\,}
\newcommand{\qsum}[1]{{+_{\negthinspace\kern-2pt #1}}\,}
\newcommand{\dpsum}{{\tilde+_{\negthinspace\kern-1pt p}}\,}
\newcommand{\dqsum}[1]{{\tilde+_{\negthinspace\kern-1pt #1}}\,}
\newcommand{\lsub}[1]{\hskip -1.5pt\lower.5ex\hbox{$_{#1}$}}
\numberwithin{equation}{section}
\newtheorem{theo}{Theorem}[section]
\newtheorem{lem}[theo]{Lemma}
\newtheorem{prop}[theo]{Proposition}
 \theoremstyle{definition}
\begin{document}

\title{The dual Minkowski problem for positive indices
}
\author[J. Hu]{Jinrong Hu}
\address{Institut f\"{u}r Diskrete Mathematik und Geometrie, Technische Universit\"{a}t Wien, Wiedner Hauptstrasse 8-10, 1040 Wien, Austria
 }
\email{jinrong.hu@tuwien.ac.at}

\begin{abstract}
We derive the stability result of the dual curvature measure with near-constant density in the even case. As an application, the existence and uniqueness of solutions to the even dual Minkowski problem for positive indices in $\rnnn$  are obtained with $n\geq 1$, provided the density of the given measure is close to 1 in the $C^{\alpha}$ norm with $\alpha\in (0,1)$.
\end{abstract}
\keywords{Dual Minkowski problem, dual curvature measure, uniqueness}

\subjclass[2010]{35A02, 52A20}

\thanks{This work was supported by the Austrian Science Fund (FWF): Project P36545.}

\maketitle

\baselineskip18pt

\parskip3pt

\section{Introduction}

The geometric properties of convex bodies in Euclidean space $\rnnn$ and their corresponding Minkowski problems play a central role in the Brunn-Minkowski theory. The classical Minkowski problem, which characterizes the surface area measure, was originally formulated and studied by Minkowski himself in \cite{M897,M903}. Since then, significant progress has been made regarding the existence, uniqueness, and regularity of solutions to this problem in a series of works \cite{A39,A42,CY76,CW06,FJ38,HgLYZ05,N53}. Beyond area measures, the curvature measures introduced by Federer \cite{S14} form another fundamental class of measures within the Brunn-Minkowski theory.

The $L_{p}$ Brunn-Minkowski theory is a generalization of the Brunn-Minkowski theory, initiated by Firey and further developed by Lutwak \cite{L93,L96} through his introduction of the $L_{p}$ surface area measure. The $L_{p}$ Minkowski problem, which involves prescribing the $L_{p}$ surface area measure, is a fundamental problem that was first formulated and solved by Lutwak \cite{L93}. Building on Lutwak's foundational work, this problem has seen rapid development, as evidenced by numerous works, including \cite{B19,B17,CL17,F62,LW13,LYZ04,Zhu14,Zh15,Zhu15}. Among the most challenging cases are the logarithmic Minkowski problem for $p=0$ (see B\"{o}r\"{o}czky-Lutwak-Yang-Zhang \cite{BLYZ12} and its citations) and the centro-affine Minkowski problem for $p=-n-1$ (see Chou-Wang \cite{CW06} or Zhu \cite{Zhu15}, and their citations).

As another parallel extension of the Brunn-Minkowski theory, the dual Brunn-Minkowski theory was initiated by Lutwak in 1970s \cite{L75}. However, it truly gained significant momentum when Huang-Lutwak-Yang-Zhang \cite{HLYZ16} discovered a new family of fundamental geometric measures, known as the dual curvature measures. These measures are dual to Federer's curvature measures. Their work introduced the dual Minkowski problem, which concerns the prescription of the dual curvature measures, and further established sufficient conditions for the existence of even solutions in the case $0<q<n+1$ within the variational framework. Since then, the dual Brunn-Minkowski theory has flourished, leading to numerous significant results and applications, as explored in \cite{BF19,BLYZ19,BLYZ20,HLYZ18,HZ18,JW17,LSW20,Z17,Z18}  and the references therein.

In the work of Lutwak-Yang-Zhang \cite{LYZ18}, they first integrated the dual Brunn-Minkowski theory into the $L_{p}$ Brunn-Minkowski theory. Furthermore, they introduced a unifying family of geometric measures, referred to as the $(p,q)$-th dual curvature measures, which serve as fundamental geometric measures in the $L_{p}$ dual Brunn-Minkowski theory. The associated Minkowski problem, called the $L_{p}$ dual Minkowski problem, encompasses all the Minkowski-type problems mentioned above. The precise formulation is given below.

\emph{The $L_{p}$ dual Minkowski problem \cite{LYZ18}:} given a nonzero finite Borel measure $\mu$ on the unit sphere $\sn$ and real numbers $p,q$, what are the necessary and sufficient conditions for $\mu$ to coincide with $(p,q)$-th dual curvature measure of a convex body $K$ containing the origin in its interior?

When the given measure $\mu$ has a continuous density $f$, the solvability of  the $L_{p}$ dual Minkowski problem amounts to solving the following {M}onge-{A}mp\`ere equation on $\sn$:
\begin{equation}\label{Mong3}
h^{1-p}\det(\nabla^{2} h+hI)|Dh|^{q-(n+1)}=f, \quad {\rm on} \ \  \sn.
\end{equation}
In the case $p=0$, Eq. \eqref{Mong3} corresponds to the dual Minkowski problem:
 \begin{equation}\label{Mong}
h\det(\nabla^{2} h+hI)|Dh|^{q-(n+1)}=f, \quad {\rm on} \ \  \sn.
\end{equation}

 Since the publication of  \cite{LYZ18}, significant progress has been made in the study of the existence and uniqueness of solutions to \eqref{Mong3}, including important contributions from \cite{BF19,CHZ19,CL21,HZ18,LLJ22,LP21,Mu2}. In particular, when the density function $f$ is a constant, \eqref{Mong3} reduces to the isotropic $L_{p}$ dual Minkowski problem. The uniqueness and non-uniqueness results of solutions to the isotropic case have been widely investigated in the condition of symmetric or non-symmetric assumptions, for instance, in \cite{AN99,BCD17,CHZ19,CCL21,F74,HI24,HZ18,LW22,LW24}. Recently, in the plane case $n=1$, Kim-Lee \cite{KL24} established a uniform diameter estimate for solutions to \eqref{Mong3} in $\mathbb{R}^2$ when $0< p<1$ and $q\geq 2$, and moreover verified the uniqueness and positivity of solutions to the $L_{p}$ Minkowski problem when $f$ is sufficiently close to a constant in the $C^{\alpha}$ norm with $0<\alpha<1$. Separately, Chen-Li \cite{CL18} provided the diameter estimate for the case $p=0$. The diameter estimate  for the case $p>q$ can be readily derived using a maximum principle argument, as demonstrated in \cite{HZ18}.  However, the situation for $p\leq q$ is more complicated. It is interesting to establish the diameter estimate of solutions to \eqref{Mong3} with $p\leq q$ for higher dimensions $n\geq 2$.

 In this paper, our first purpose is to establish a stability result for the dual curvature measure with near-constant density in the even case utilizing the local Brunn-Minkowski inequality, inspired by the work of Hu-Ivaki \cite{HI25}. We need recall the following uniqueness result of solutions to the isotropic $L_{p}$ dual Minkowski problem shown by Ivaki-Milman \cite{IM23}.

\begin{theo}\cite{IM23}\label{ori}
Let $n\geq 1$. Suppose $p\geq -(n+1)$ and $q\leq n+1$ with at least one of these inequalities being strict. Let $\partial K$ be a smooth, strictly convex and origin-centred hypersurface with  the support function $h$ such that $h^{p-1}|Dh|^{n+1-q}\kappa=1$. Then $\partial K$ is the unit sphere.

\end{theo}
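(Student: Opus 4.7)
\emph{Overall strategy.} The plan is to combine an integral identity extracted from the prescribed Monge--Amp\`ere equation with a sharp $L_p$ dual mixed-volume inequality whose equality case forces $K$ to be a centred ball. Using $\det(\nabla^2 h+hI)=\kappa^{-1}$ on $\sn$ together with $|Dh|^2=h^2+|\nabla h|^2$, the hypothesis rewrites as the Monge--Amp\`ere equation
\begin{equation*}
\det(\nabla^2 h + hI) = h^{p-1}\bigl(h^2+|\nabla h|^2\bigr)^{(n+1-q)/2}.
\end{equation*}
Multiplying both sides by $h$, integrating over $\sn$, and invoking the standard volume formula $(n+1)V(K)=\int_{\sn} h\det(\nabla^2 h+hI)\,d\sigma$, I would obtain
\begin{equation*}
(n+1)V(K) = \int_{\sn} h^{p}\,|Dh|^{n+1-q}\,d\sigma,
\end{equation*}
which identifies, up to a normalizing constant, the right-hand side with an $L_p$ dual mixed volume of $K$ with the unit ball $B$.

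\emph{Sharp inequality and rigidity.} The next step is to invoke a sharp inequality of Brunn--Minkowski / Alexandrov--Fenchel type, valid precisely in the range $p\geq -(n+1)$ and $q\leq n+1$ (with at least one strict) for origin-centred convex bodies, of the schematic form
\begin{equation*}
\int_{\sn} h^{p}\,|Dh|^{n+1-q}\,d\sigma \geq (n+1)\,V(K)^{\alpha}\,V(B)^{\beta},
\end{equation*}
with exponents $\alpha+\beta=1$ determined by $n,p,q$, and with equality if and only if $K$ is a dilate of $B$. Combining this with the identity above forces the equality case, so $K=rB$ for some $r>0$. Plugging $K=rB$ back into the hypothesis produces $r^{p-q}=1$, which in the strict regime pins down $r=1$ and concludes $K=B$.

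\emph{Main obstacle.} The entire weight of the argument lies in establishing the sharp inequality throughout the full admissible $(p,q)$-range; the strictness hypothesis ``at least one being strict'' is exactly what rules out degenerate boundary cases. The natural route is a second-variation / spectral-gap analysis at the ball: linearize around $h\equiv 1$, identify the resulting Jacobi operator with an elliptic operator on $\sn$ whose first non-trivial eigenspace is spanned by the coordinate functions $x_1,\dots,x_{n+1}$, and use the origin-centred hypothesis to eliminate precisely this dangerous mode (which corresponds infinitesimally to translations of the body). Strict coercivity on the remaining spectrum should then integrate to the global sharp inequality. An alternative is to chain Alexandrov--Fenchel inequalities or to invoke a local Brunn--Minkowski estimate. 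In any approach, the decisive technical challenge will be handling the boundary of the admissible $(p,q)$-range uniformly and upgrading the infinitesimal rigidity at the ball to a global rigidity statement.
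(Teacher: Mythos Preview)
The paper does not prove this theorem; it is quoted from Ivaki--Milman \cite{IM23}. Still, your proposal has a genuine gap worth naming.

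Everything rests on the ``sharp inequality'' in your second step, which you yourself flag as the main obstacle. But this is not an obstacle within an otherwise complete argument---it \emph{is} the argument. A global inequality of the form
\[
\int_{\sn} h^{p}\,|Dh|^{n+1-q}\,d\sigma \;\geq\; (n+1)\,V(K)^{\alpha}V(B)^{\beta},
\]
with equality only for centred balls and valid for all origin-symmetric bodies throughout the full range $p\geq -(n+1)$, $q\leq n+1$, is not available in the literature and is at least as deep as the uniqueness statement itself. Your sketched route to it (second variation at $h\equiv 1$, eliminate the translation mode by origin-symmetry) would yield only \emph{local} rigidity near the ball; you offer no mechanism for the passage from infinitesimal coercivity to a global inequality, and ``chain Alexandrov--Fenchel inequalities'' is not a method here because the functional $\int h^{p}|Dh|^{n+1-q}\,d\sigma$ is not a mixed volume for general $p,q$.

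The actual Ivaki--Milman argument, visible in this paper through Lemma~\ref{IN} (explicitly ``motivated by \cite[Lemma~3.2]{IM23}''), bypasses any global sharp inequality. One inserts the test functions $f_l=|X|^{\alpha/2}\langle X,E_l\rangle-c_l$, with $c_l$ chosen so that $\int f_l\,dV_n=0$, directly into the spectral form of the local Aleksandrov--Fenchel inequality (Lemma~\ref{SF}), sums over $l$, and uses the equation $h^{p-1}|Dh|^{n+1-q}\kappa=1$ to convert the resulting integral inequality into one that, in the admissible $(p,q)$-range, forces either $\nabla h\equiv 0$ or equality in Lemma~\ref{SF}; origin-symmetry then kills the nontrivial equality case. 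You do mention ``local Brunn--Minkowski'' as an alternative, and that is precisely the right tool---but it should be applied directly to the PDE as above, not used as a stepping stone toward an unproved global inequality.
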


When $K$ is additionally assumed to be origin-symmetric, this uniqueness theorem was proved by Chen-Huang-Zhao \cite{CHZ19} from another point of view, provided $p\geq -(n+1)$ and $q\leq \min\{n+1, n+1+p\}$ with $p\neq q$. Now the stability result for the dual curvature measure is stated as follows.

\begin{theo}\label{main}
Let $n\geq 1$. Suppose $n-3\leq q \leq n+1$. Let $K$ be a smooth, origin-symmetric and strictly convex body with the support function $h$. Then
\[
\delta_{2}(\bar{K}, B_{1})\leq \beta\left[\frac{\max_{\sn} (|Dh|^{q-(n+1)}\frac{h}{\kappa})}{\min_{\sn} (|Dh|^{q-(n+1)}\frac{h}{\kappa})}-1  \right]^{\frac{1}{2}},
\]
where $\delta_{2}$ is the $L^{2}$-distance (see Section \ref{Sec2} for its definition),  $\beta$ is  a positive constant depending only on $n$, and
\[
\bar{K}=\frac{K}{\int_{\sn}hd\sigma/\int_{\sn}d\sigma}.
\]

\end{theo}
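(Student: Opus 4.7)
My plan is to combine the isotropic rigidity of Theorem \ref{ori} at $p=0$ with a quantitative Poincar\'e-type estimate for origin-symmetric convex bodies in the spirit of the local even Brunn--Minkowski approach used in Hu--Ivaki \cite{HI25}. The square-root scaling will emerge because a suitable nonnegative ``defect'' functional $F(K)$ admits a quadratic lower bound in $\|h-1\|_{L^2(\sn)}^2$ but only a linear upper bound in $\max\psi/\min\psi-1$, where $\psi := |Dh|^{q-(n+1)}h/\kappa$. By homogeneity of both $\delta_2(\cdot,B_1)$ and $\psi$, I rescale so that $\int_\sn h\,d\sigma = |\sn|$, making $\bar K = K$, and write $h = 1+\phi$ with $\phi$ even and $\int_\sn \phi\,d\sigma = 0$. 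When $\max\psi/\min\psi-1$ exceeds a fixed threshold the inequality is trivial after enlarging $\beta$, so I may assume $\phi$ is $C^0$-small; standard $C^{2,\alpha}$ Monge--Amp\`ere estimates applied to \eqref{Mong} then deliver $C^{2,\alpha}$-smallness as well.

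The defect functional I propose is
\[
F(K) := \log\Bigl(\tfrac{1}{|\sn|}\!\int_\sn \psi\,d\sigma\Bigr) - \tfrac{1}{|\sn|}\!\int_\sn \log\psi\,d\sigma,
\]
which is nonnegative by Jensen's inequality and vanishes iff $\psi$ is constant on $\sn$, equivalently (by Theorem \ref{ori}) iff $K=B_1$. The pointwise comparison $\log \overline{\psi} - \overline{\log\psi}\leq \log(\max\psi/\min\psi)\leq \max\psi/\min\psi -1$ furnishes the linear upper bound $F(K)\leq \max\psi/\min\psi -1$.

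For the matching lower bound I would Taylor-expand $\psi = 1 + \mathcal L\phi + \tfrac12\mathcal Q(\phi) + O(\|\phi\|_{C^2}^3)$, where $\mathcal L\phi := \Delta_\sn\phi + q\phi$ is the linearization of $h\mapsto\psi$ at $h\equiv 1$, and substitute into $F$. The linear contribution integrates to zero because $\int\phi\,d\sigma = 0$, and what survives has the structure of the quadratic form underlying the origin-symmetric local Brunn--Minkowski inequality. Invoking the latter in the dual $(p=0,q)$ setting (the Kolesnikov--Milman even local Brunn--Minkowski inequality, transported to the dual framework as in Hu--Ivaki) then yields, for $K$ near $B_1$,
\[
F(K) \;\geq\; c_0(n,q)\,\|\phi\|_{L^2(\sn)}^2 - C\|\phi\|_{C^2}^3,
\]
with $c_0(n,q) > 0$: the hypothesis $q\leq n+1$ keeps $q$ strictly below the first nontrivial even eigenvalue $2(n+1)$ of $-\Delta_\sn$, furnishing the spectral gap at $B_1$ on even functions orthogonal to constants, while the lower bound $n-3\leq q$ is exactly the regime in which the corresponding local even Brunn--Minkowski estimate retains the correct sign. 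Combining the two bounds for $F$ and absorbing the cubic remainder via the $C^{2,\alpha}$-smallness obtained in the normalization step produces $\|\phi\|_{L^2}^2 \leq \beta^2(\max\psi/\min\psi - 1)$, which is the claimed bound.

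The main obstacle is the coercive lower bound on $F$: one needs $c_0(n,q)$ to remain positive \emph{uniformly} over origin-symmetric $K$ in a $C^{2,\alpha}$-neighbourhood of $B_1$, not merely at $B_1$ itself, and the hypothesis $n-3\leq q\leq n+1$ must be shown to match precisely the range of validity of the local even Brunn--Minkowski inequality once it is transported to the dual Minkowski setting. Establishing this uniform coercivity, together with verifying that the cross terms between $\mathcal L\phi$ and $\mathcal Q(\phi)$ in the second-order expansion of $F$ carry the right sign throughout the prescribed range of $q$, is the technical heart of the argument.
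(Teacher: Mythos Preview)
Your proposal has a genuine circularity problem at the normalization step. You write: ``When $\max\psi/\min\psi-1$ exceeds a fixed threshold the inequality is trivial after enlarging $\beta$, so I may assume $\phi$ is $C^0$-small.'' The first clause is fine (after normalizing so that $\int_{\sn}h\,d\sigma=|\sn|$ one has $\max_{\sn}h\le 1/c_1$ by convexity, hence $\delta_2(\bar K,B_1)$ is bounded by a dimensional constant). But the conclusion ``$\phi$ is $C^0$-small'' does \emph{not} follow from $\varepsilon:=\max\psi/\min\psi-1$ being small: that implication is essentially a $C^0$ stability statement for the dual curvature measure, which is precisely what Theorem~\ref{main} (together with Lemma~\ref{main2}) is used to establish in Lemma~\ref{co}. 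Without it you have no uniform lower bound on $\min_{\sn}h$, hence no a priori $C^{2,\alpha}$ estimate, hence no way to enter the perturbative regime where your Taylor expansion of $F(K)$ and the absorption of the cubic remainder make sense. A compactness argument does not rescue this either: Blaschke selection gives only Hausdorff convergence to some limit body that need not be smooth or strictly convex, so Theorem~\ref{ori} cannot be applied to it, and $C^{2,\alpha}$ compactness again presupposes the missing $C^0$ bound.

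The paper avoids this trap by proving the inequality \emph{directly and non-perturbatively}, with no smallness assumption on $h-1$. The spectral Aleksandrov--Fenchel inequality (Lemma~\ref{SF}) applied to the test functions $f_l=|X|^{\alpha/2}\langle X,E_l\rangle$ yields, for $\alpha=q-(n+1)$ and \emph{every} origin-symmetric $K$, the integral inequality $n\int_{\sn}|Dh|^{2}\,dC_q\le\int_{\sn}h(\Delta h+nh)\,dC_q$; the hypothesis $n-3\le q\le n+1$ enters here as the exact algebraic condition $\tfrac{\alpha^2}{4}+\alpha\le0$ that makes an otherwise bad term drop out with the right sign. Sandwiching $dC_q$ between $m\,d\sigma$ and $M\,d\sigma$, integrating by parts, and applying the ordinary Poincar\'e inequality on $\sn$ then gives $(n+1+\varepsilon)\int|\nabla h|^2\le n\varepsilon\int h^2$ and hence the $L^2$ bound, with no recourse to Theorem~\ref{ori}, no linearization, and no a priori closeness to the ball. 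Your vaguer justification that $n-3\le q$ is ``the regime in which the local even Brunn--Minkowski estimate retains the correct sign'' does not match this mechanism and would need to be made precise independently.
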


This theorem tells us that if the density of the dual curvature measure of a smooth, origin-symmetric and strictly convex body regarding to the spherical Lebesgue measure is approximately constant, a rescaled version of the body is close to the unit ball in the $L^{2}$-distance. Note that when $q=n+1$ in \eqref{Mong}, some progress on the stability of the cone-volume measure was made by B\"{o}r\"{o}czky-De \cite{BD21}, based on the log-Minkowski inequality in the class of convex bodies with many symmetries, as proved by B\"{o}r\"{o}czky-Kalantzopoulos \cite{BK22}. Subsequently, Ivaki \cite{IV22} proved the stability of the cone-volume measure in the class of origin-symmetric bodies with respect to the $L^{2}$-distance, the recent work of Hu-Ivaki \cite{HI25} proved a similar stability result without symmetry conditions.

An application of Theorem \ref{main} is to get the existence of even solutions to the regular dual Minkowski problem for positive indices  using degree theory methods, as well as its uniqueness, provided the prescribed data is sufficiently close to 1 in the $C^{\alpha}$ norm for $0<\alpha<1$.

\begin{theo}\label{coro2}
Suppose that $q$ satisfies either $ 0<q\leq n$ if $ \ 1\leq n\leq 3$,  or $ n-3\leq q\leq n$ if $ n>3$. Let $\alpha\in (0,1)$.  Let $f$ be an even, smooth and positive function on $\sn$. There exists a constant $\varepsilon_{0}>0$ depending only on $n$, $\alpha$ such that if  $||f-1||_{C^{\alpha}}\leq \varepsilon$ for some small $\varepsilon\in (0,\varepsilon_{0})$, then Eq. \eqref{Mong} has a unique, smooth, origin-symmetric and strictly convex solution.
\end{theo}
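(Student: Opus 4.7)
My plan is to combine the stability estimate in Theorem \ref{main} with a linearization argument at the constant solution $h \equiv 1$, which by Theorem \ref{ori} is the unique even smooth solution when $f \equiv 1$. View $F(h) := h\det(\nabla^2 h + hI)|Dh|^{q-(n+1)}$ as a smooth map from $C^{2,\alpha}_e(\sn)$ to $C^\alpha_e(\sn)$, the subscript $e$ denoting even functions. A direct computation (using $\det(\nabla^2 h + hI)|_{h=1} = 1$, $|Dh||_{h=1} = 1$, and a first-order expansion of $\det$) yields the Fr\'echet derivative
\[
dF|_{h=1}[\phi] = \phi + (\Delta_{\sn}\phi + n\phi) + (q-n-1)\phi = \Delta_{\sn}\phi + q\phi.
\]
On even spherical harmonics of degree $2k$ this operator acts as multiplication by $q - 2k(2k+n-1)$. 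Under the hypotheses of the theorem, the $k=0$ eigenvalue equals $q>0$, and for $k \geq 1$ we have $2k(2k+n-1) \geq 2(n+1) > n > q$, so no eigenvalue vanishes. Being Fredholm of index zero on H\"older spaces, $dF|_1$ is an isomorphism, and the implicit function theorem (equivalently, degree theory along the homotopy $f_t = (1-t)+tf$) furnishes $\eta > 0$ and a $C^{2,\alpha}$-neighborhood $V$ of $1$ such that for every $f$ with $\|f - 1\|_{C^\alpha} < \eta$ there is a unique $h \in V$ satisfying $F(h) = f$. Smoothness of this solution follows from standard Schauder bootstrap, giving the existence claim.

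To upgrade the local uniqueness from the implicit function theorem to the global uniqueness asserted in the theorem, I show that \emph{every} even smooth positive solution $h$ of $F(h) = f$ with $\|f-1\|_{C^\alpha} \leq \varepsilon$ must lie in $V$ for $\varepsilon$ small. Rewriting the equation as $|Dh|^{q-(n+1)}(h/\kappa) = f$ and using $1-\varepsilon \leq f \leq 1+\varepsilon$ gives
\[
\frac{\max_{\sn} |Dh|^{q-(n+1)}(h/\kappa)}{\min_{\sn} |Dh|^{q-(n+1)}(h/\kappa)} \leq \frac{1+\varepsilon}{1-\varepsilon} = 1 + O(\varepsilon).
\]
Theorem \ref{main} then yields $\delta_{2}(\bar K, B_{1}) \leq C\sqrt{\varepsilon}$. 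Since $\bar K$ is origin-symmetric and convex, the $L^{2}$-closeness of its support function to $1$ together with the uniform Lipschitz bound from convexity upgrades to $C^{0}$-closeness of $\bar K$ to $B_{1}$. Integrating the equation gives a dual-volume identity $\int_{\sn} f\, d\sigma = (n+1)\tilde V_{q}(K)$, and $\int f \approx |\sn|$ then forces the rescaling factor $\mu := |\sn|^{-1}\int h\,d\sigma$ to be close to $1$; hence $\|h - 1\|_{C^{0}}$ is small.

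Once $h$ is uniformly $C^{0}$-close to $1$, the Monge-Amp\`ere operator $\det(\nabla^2 h + hI)$ is uniformly elliptic with a H\"older-continuous and two-sided bounded right-hand side, so Caffarelli's $C^{2,\alpha}$ regularity together with Schauder estimates delivers $\|h - 1\|_{C^{2,\alpha}} \to 0$ as $\varepsilon \to 0$. Choosing $\varepsilon_{0}$ small therefore guarantees that every solution lies in $V$ and hence coincides with the one from the implicit function theorem. The main obstacle is precisely this bootstrap: extracting $C^{2,\alpha}$-closeness of the original support function from the $L^{2}$-stability of the normalized body $\bar K$. Two points demand care: quantitatively controlling the rescaling factor $\mu$ via the dual-volume identity, and running Caffarelli-type regularity with ellipticity constants that depend only on $\|h - 1\|_{C^{0}}$. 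The hypotheses on $(n,q)$ reflect two complementary constraints: $n-3 \leq q$ is needed for Theorem \ref{main}, whereas $0 < q < n$ ensures that $\Delta_{\sn} + q$ has trivial kernel on even functions.
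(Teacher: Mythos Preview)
Your approach is essentially the same as the paper's: linearize at $h\equiv 1$, check that $\Delta_{\sn}+q$ is invertible on even functions for $0<q<n$, obtain existence (you via the implicit function theorem, the paper via degree theory along $f_t=(1-t)+tf$ with the identical linearized operator and eigenvalue count), then use Theorem~\ref{main} to force every even solution $C^0$-close to $1$, bootstrap to $C^{2,\alpha}$-closeness, and invoke local uniqueness. The paper closes the uniqueness argument by compactness and contradiction against Theorem~\ref{ori} rather than your direct quantitative route, but the two are equivalent once the a priori estimates are in hand; your control of the rescaling factor $\mu$ via the identity $\int_{\sn} f\,d\sigma=\int_{\sn}\rho^q\,d\sigma$ is exactly what the paper does in the proof of its $C^0$ lemma.

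The one step you treat too lightly is the passage from $C^0$ bounds to uniform $C^{2,\alpha}$ bounds. You claim that Caffarelli's $C^{2,\alpha}$ regularity applies once the right-hand side is H\"older, but the right-hand side $f\rho^{n+1-q}/h$ depends on $\nabla h$ through $\rho=|Dh|=(h^2+|\nabla h|^2)^{1/2}$, so its H\"older norm is not controlled by $\|h\|_{C^0}$ alone, and Caffarelli's classical estimate for $\det D^2 u=g(x)$ does not apply directly. The paper fills this gap with an explicit Pogorelov-type maximum-principle estimate on the principal radii (its Lemma~\ref{C2}), after which Evans--Krylov and Schauder are invoked. Your outline can be repaired either by quoting a $C^2$ estimate for Monge--Amp\`ere equations with gradient-dependent right-hand side, or by bootstrapping through Caffarelli's $W^{2,p}$ theory (bounded RHS $\Rightarrow h\in C^{1,\beta}$ $\Rightarrow$ RHS $\in C^{\beta'}$ $\Rightarrow$ $h\in C^{2,\beta'}$), but this is where most of the paper's analytic labor lies and should not be waved through.
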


It should be stressed that the range of $q$ in Theorem \ref{coro2} guarantees both the existence and uniqueness of solutions to \eqref{Mong}. We also remark that, the existence of the even dual Minkowski problem in the smooth category for $q>0$ was previously demonstrated by Li-Sheng-Wang \cite{LSW20} from the perspective of geometric flows. The key ingredient of deriving the solvability of \eqref{Mong} is to obtain the $C^{0}$ estimate of solutions to \eqref{Mong}. In addition, partial uniqueness results for solutions to the (anisotropic) dual Minkowski problem \eqref{Mong} have been established.  For $q<0$,  Zhao \cite{Z17} proved uniqueness; the case $q=0$ is classical and stems from the uniqueness result of intergral curvature shown by Aleksandrov; for the logarithmic Minkowski problem ($q=n+1$), uniqueness results were found by  B\"{o}r\"{o}czky-Lutwak-Yang-Zhang \cite{BLYZ13} when the given measure is even in the planar case $n = 1$, by Chen-Huang-Li-Liu \cite{CY20} when the density $f$ of the given measure is even and close to 1 in the $C^{\alpha}$ norm, building on the local results given by Kolesnikov-Milman \cite{KM22}, and recently by Chen-Feng-Liu \cite{CFL22} when $f$ is close to 1 in the $C^{\alpha}$ norm without any symmetry condition in $\rnnnn$, later  B\"{o}r\"{o}czky-Saroglou \cite{BS24} and Hu-Ivaki \cite{HI25} extended  the results of \cite{CFL22} to higher dimensions independently, along different lines. However, the uniqueness in the case of $0<q\neq n+1$  has not been previously settled and remains largely open. Meanwhile, it would be interesting to generalize Theorem \ref{coro2} to the non-even case.

The structure of this paper is organized as follows. In Section \ref{Sec2}, we provide some basic facts related to convex bodies. In Section \ref{Sec3}, we present  the proof of Theorem \ref{main}. Finally, the proof of Theorem \ref{coro2} is given in Section \ref{Sec4}.

\section{Preliminaries}
\label{Sec2}

There are many standard references on the theory of convex bodies, including the comprehensive books of  Gardner \cite{G06} and Schneider \cite{S14}.

Let ${\rnnn}$ denote the $(n+1)$-dimensional Euclidean space.   For $Y, Z\in {\rnnn}$, $ \langle Y, Z\rangle $ represents the standard inner product. For a vector $X\in{\rnnn}$, $|X|=\sqrt{\langle X, X\rangle}$ is the Euclidean norm. Let $B_{1}$  denote the unit ball in $\rnnn$ and ${\sn}$ denote the unit sphere.  A convex body is defined as a compact convex set of ${\rnnn}$ with non-empty interior.

The support function of a convex body $K$ in $\rnnn$ (with respect to the origin) is defined for $x\in{\sn}$ as
\[
h_{K}(x)=\max\{\langle x, Y\rangle:Y \in K\}.
\]
Unless it causes confusion, we later abbreviate $h_{K}$ as $h$.

 The radial function $\rho_{K}$ of $K$ is denoted by
\begin{equation*}
\rho_{K}(u)=\max\{s>0: su\in K\}, \quad \forall u\in \sn.
\end{equation*}
Note that $\rho_{K}(u)u\in \partial K$ for any $u\in \sn$. Abbreviate $\rho_{K}$ as $\rho$ later unless it causes confusion.

 The $L^{2}$-distance of two convex bodies $K_{1}, K_{2}$ is expressed as
\[
\delta_{2}(K_{1},K_{2}):=\left( \frac{1}{\int_{\sn}d\sigma} \int_{\sn}|h_{K_{1}}-h_{K_{2}}|^{2}d\sigma\right)^{\frac{1}{2}}
\]
and their Hausdorff distance is defined as
\[
\delta_{H}(K_{1},K_{2}):=\max_{\sn}|h_{K_{1}}-h_{K_{2}}|.
\]

 Given a convex body $K$ in $\rnnn$, for $\mathcal{H}^{n}$ almost all $X\in \partial K$, the unit outer normal of $K$ at $X$ is unique. In this case, we denote by $\nu_{K}$ the Gauss map, which assigns to each $X\in \partial' K$ to its unique unit outer normal, where $\mathcal{H}^{n}(\partial K \backslash \partial' K)=0$ and $\partial' K$ is a Borel set (see, e.g. Schneider \cite{S14}). Therefore, the map $\nu_{K}$ is almost everywhere defined on $\partial K$.
Furthermore, for $\omega\subset {\sn}$, the inverse Gauss map $\nu_{K}$ is expressed as
\begin{equation*}
\nu^{-1}_{K}(\omega)=\{X\in \partial' K:  \nu_{K}(X) {\rm \ is \ defined \ and }\ \nu_{K}(X)\in \omega\}.
\end{equation*}

For a smooth and strictly convex body $K$, that is, a body whose boundary is $C^{\infty}$-smooth and is of positive Gauss curvature, we abbreviate $\nu^{-1}_{K}$ as $F$ for simplicity in the subsequent discussion. Then the support function of $K$ can be represented as
\begin{equation}\label{hhom}
h(x)=\langle x, F(x)\rangle=\langle\nu_{K}(X), X\rangle, \ {\rm where} \ x\in {\sn}, \ \nu_{K}(X)=x \ {\rm and} \ X\in \partial K.
\end{equation}
 Let $\{e_{1},e_{2},\ldots, e_{n}\}$ be a local orthonormal frame on ${\sn}$, and let $h_{i}$ denote the first order covariant derivatives of $h(\cdot)$  with respect to a local orthonormal frame on ${\sn}$. Differentiating \eqref{hhom} with respect to $e_{i}$ , we derive
\[
h_{i}=\langle e_{i}, F(x)\rangle+\langle x, F_{i}(x)\rangle.
\]
Since $F_{i}$ is tangent to $ \partial K$ at $F(x)$, there is
\begin{equation}\label{Fi}
h_{i}=\langle e_{i},F(x)\rangle.
\end{equation}
Combining \eqref{hhom} with \eqref{Fi}, we have
\begin{equation}\label{Fdef}
F(x)=\sum_{i} h_{i}e_{i}+hx=\nabla h+hx.
\end{equation}
Here $\nabla$ denotes the (standard) spherical gradient. On the other hand, since we can extend $h(\cdot)$ to $\rnnn$ as a 1-homogeneous function $h(\cdot)$, restricting the gradient of $h(\cdot)$ on $\sn$ yields
\begin{equation}\label{hf}
D h(x)=F(x), \ \forall x\in{\sn},
\end{equation}
where $D$ is the gradient operator in $\rnnn$. Let $h_{ij}$ be the second-order covariant derivatives of $h$ regarding a local orthonormal frame on ${\sn}$. Then, applying \eqref{Fdef} and \eqref{hf}, we have
\begin{equation*}\label{hgra}
D h(x)=\sum_{i}h_{i}e_{i}+hx, \quad F_{i}(x)=\sum_{j}(h_{ij}+h\delta_{ij})e_{j}.
\end{equation*}

Denote by $\sigma_{k}$ ($1\leq k \leq n$) the $k$-th elementary symmetric function of principal radii of curvature. The eigenvalues of matrix $\{h_{ij}+h\delta_{ij}\}$,  denoted by $\lambda=(\lambda_{1},\ldots, \lambda_{n})$, represent the principal radii of curvature at the point $X(x)\in \partial K$. Consequently, $\sigma_{1}=\Delta h+nh$, where $\Delta$ is the spherical Laplace operator, and the Gauss curvature $\kappa$  of $\partial K$ is given by
\begin{equation*}
\kappa=\frac{1}{\sigma_{n}}=\frac{1}{\det(h_{ij}+h\delta_{ij})}.
\end{equation*}

\section{Stability of the dual curvature measure}
\label{Sec3}
The following lemma is the local version of Aleksandrov-Fenchel inequality and also represents its spectral formulation, which originates from Hilbert's work (see, e.g. \cite{AN97, AB20}) and has been further studied, for example,  in \cite{IM23, KM22, ML22}.

\begin{lem}\label{SF}\cite{AN97, AB20}
Let $f\in C^{2}(\sn)$ with $\int_{\sn}fh\sigma_{k}d\sigma=0$. Then we get
\begin{equation*}
k\int_{\sn}f^{2}h\sigma_{k}d\sigma\leq \int_{\sn}\sum_{i,j}h^{2}\sigma^{ij}_{k}\nabla_{i}f\nabla_{j}fd\sigma,
\end{equation*}
where $\sigma^{ij}_{k}=\frac{\partial \sigma_{k}}{\partial b_{ij}}$ with $b_{ij}:=h_{ij}+h\delta_{ij}$. Equality holds if and only if for some vector $v\in \rnnn$, we have
\[
f(x)=\langle \frac{x}{h(x)},v\rangle,  \quad \forall x\in \sn.
\]
\end{lem}
By virtue of Lemma \ref{SF} for $k=n$, we obtain the following result, see also \cite[Lemma 2.15]{LW24}.

\begin{lem}\label{IN}
Let $X=Dh:\sn\rightarrow \partial K$ and $\alpha\in \rn$. Then we have
\begin{equation*}
\begin{split}
n\int_{\sn}|X|^{\alpha+2}dV_{n}&\leq n\frac{|\int_{\sn}|X|^{\frac{\alpha}{2}}X dV_{n}|^{2}}{\int_{\sn}dV_{n}}+\int_{\sn}|X|^{\alpha}h(\Delta h+nh)dV_{n}\\
&\quad +\left(\frac{\alpha^{2}}{4}+\alpha\right)\int_{\sn}|X|^{\alpha-1}h \langle \nabla h, \nabla |X|\rangle d V_{n},
\end{split}
\end{equation*}
where $dV_{n}=h\sigma_{n}d\sigma$.
\end{lem}
\begin{proof}

 Let  $\{e_{i}\}^{n}_{i=1}$ be a local orthonormal frame of $\sn$ so that $(h_{ij}+h\delta_{ij})(x_{0})=\lambda_{i}(x_{0})\delta_{ij}$. Suppose $\{E_{l}\}^{n+1}_{l=1}$ is an orthonormal basis of $\rnnn$. Motivated by \cite[Lemma 3.2]{IM23}, for $l=1,\ldots,n+1$, we set
the functional $f_{l}:\sn\rightarrow \rn$ as
\begin{equation*}
f_{l}(x)=|X|^{\frac{\alpha}{2}}\langle X(x),E_{l}\rangle-\frac{\int_{\sn}|X|^{\frac{\alpha}{2}}\langle X(x),E_{l}\rangle dV_{n}}{\int_{\sn}dV_{n}}.
\end{equation*}
It is clear to see  $\int_{\sn}f_{l}dV_{n}=0$ for $1\leq l \leq n+1$, then by means of Lemma \ref{SF} to $f_{l}$ and summing over $l$, there is
\begin{equation}\label{dx}
n\sum_{l}\int_{\sn}f^{2}_{l}dV_{n}=n\left[ \int_{\sn}|X|^{\alpha+2}dV_{n}-\frac{\Big|\int_{\sn}|X|^{\frac{\alpha}{2}}X dV_{n}\Big|^{2}}{\int_{\sn}dV_{n}} \right]\leq \sum_{l,i,j}\int_{\sn}h^{2}\sigma^{ij}_{n}\nabla_{i}f_{l}\nabla_{j}f_{l}d\sigma.
\end{equation}
Due to $\nabla_{i}X=\sum_{j}(h_{ij}+h\delta_{ij})e_{j}=\lambda_{i}e_{i}$ at $x_{0}$, we have $\langle e_{i}, X\rangle=h_{i}$ and $\sum_{i}\lambda_{i}\langle e_{i},X\rangle^{2}=|X|\langle \nabla h, \nabla |X|\rangle$ at $x_{0}$. Employing $\sum_{i}\frac{\partial \sigma_{n}}{\partial \lambda_{i}}\lambda^{2}_{i}=\sigma_{1}\sigma_{n}$ and $\frac{\partial \sigma_{n}}{\partial \lambda_{i}}\lambda_{i}=\sigma_{n}$ for $\forall i$, there holds
\begin{equation}
\begin{split}
\label{cwq}
\sum_{l,i,j}\sigma^{ij}_{n}\nabla_{i}f_{l}\nabla_{j}f_{l}&=\sum_{l,i}\frac{\partial \sigma_{n}}{\partial \lambda_{i}}((\nabla_{i}(|X|^{\frac{\alpha}{2}}))\langle X, E_{l}\rangle+|X|^{\frac{\alpha}{2}}\langle \nabla_{i}X,E_{l}\rangle)^{2}\\
&=\sum_{l,i}\frac{\partial \sigma_{n}}{\partial \lambda_{i}}\left(\frac{\alpha}{2} |X|^{\frac{\alpha}{2}-2}\langle \lambda_{i}e_{i},X\rangle\langle X, E_{l}\rangle+|X|^{\frac{\alpha}{2}}\langle \lambda_{i}e_{i},E_{l}\rangle\right)^{2}\\
&=\sum_{i}\frac{\partial \sigma_{n}}{\partial \lambda_{i}}\lambda^{2}_{i}\left[|X|^{\alpha}+\left(\frac{\alpha^{2}}{4}+\alpha\right)|X|^{\alpha-2}\langle e_{i},X\rangle^{2}\right]\\
&=|X|^{\alpha}\sigma_{1}\sigma_{n}+\left(\frac{\alpha^{2}}{4}+\alpha\right)|X|^{\alpha-1}\langle \nabla h, \nabla |X|\rangle \sigma_{n}.
\end{split}
\end{equation}
Then, combining \eqref{cwq} with \eqref{dx}, and applying $dV_{n}=h\sigma_{n}d\sigma$, we derive
\begin{equation*}
\begin{split}
\label{Up3}
&n\left[ \int_{\sn}|X|^{\alpha+2}dV_{n}-\frac{\Big|\int_{\sn}|X|^{\frac{\alpha}{2}}X dV_{n}\Big|^{2}}{\int_{\sn}dV_{n}} \right]\\
&\leq\int_{\sn}h^{2}|X|^{\alpha}\sigma_{1}\sigma_{n}d\sigma+\left(\frac{\alpha^{2}}{4}+\alpha\right)\int_{\sn}|X|^{\alpha-1}h^{2}\sigma_{n}\langle \nabla h, \nabla |X|\rangle d\sigma\\
&=\int_{\sn}|X|^{\alpha}h(\Delta h+nh)dV_{n} +\left(\frac{\alpha^{2}}{4}+\alpha\right)\int_{\sn}|X|^{\alpha-1}h \langle \nabla h, \nabla |X|\rangle d V_{n}.
\end{split}
\end{equation*}
The proof is complete.
\end{proof}

Using Lemma \ref{IN}, we get the following result, which is the main ingredient of proving Theorem \ref{main}.
\begin{prop}\label{ab}
Let $K$ be origin-symmetric. Let $m, M>0$. Suppose $n-3\leq q \leq n+1$. Let $m\leq h |Dh|^{q-(n+1)}\frac{1}{\kappa}\leq M$. Then we have \[
n\int_{\sn}|Dh|^{2}d\sigma\leq \frac{M}{m}\int_{\sn}h(\Delta h+nh)d\sigma.
\]
\end{prop}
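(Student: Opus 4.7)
The plan is to apply Lemma~\ref{IN} with the specific choice $\alpha=q-(n+1)$ and then convert the resulting $dV_n$-inequality into a $d\sigma$-inequality via the two-sided pinching $m\le h|Dh|^{q-(n+1)}/\kappa\le M$. Note the range $n-3\le q\le n+1$ translates exactly to $\alpha\in[-4,0]$, which in turn makes $\alpha^{2}/4+\alpha=\alpha(\alpha+4)/4\le 0$; this non-positivity is what will allow us to discard the third right-hand term in \eqref{aa7}.

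First I would kill the first right-hand term in \eqref{aa7} by evenness. Origin-symmetry of $K$ gives $h(-x)=h(x)$, and since $h$ extends to a $1$-homogeneous function on $\rnnn$, differentiating yields $X(-x)=Dh(-x)=-Dh(x)=-X(x)$. On the other hand $|X|$ and $\sigma_{n}=\det(h_{ij}+h\delta_{ij})$ are both even, so $dV_n=h\sigma_n\,d\sigma$ is invariant under the antipodal map, while the integrand $|X|^{\alpha/2}X$ is odd; hence $\int_{\sn}|X|^{\alpha/2}X\,dV_n=0$.

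Next I would verify the third term has the right sign. Differentiating $|X|^{2}=|\nabla h|^{2}+h^{2}$ and substituting $h_{ij}=b_{ij}-h\delta_{ij}$ gives $|X|\nabla_i|X|=b_{ij}h_{j}$, so $\langle\nabla h,\nabla|X|\rangle=h_i b_{ij}h_j/|X|\ge 0$ because $b$ is positive definite (and $|X|\ge h>0$ since the origin is interior). Combined with the non-positive coefficient, the third term of \eqref{aa7} is non-positive and may be dropped.

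After these two reductions, \eqref{aa7} collapses to
\[
n\int_{\sn}|X|^{\alpha+2}\,dV_n \;\le\; \int_{\sn}|X|^{\alpha}\,h(\Delta h+nh)\,dV_n.
\]
Rewriting $dV_n=(h/\kappa)\,d\sigma$ on both sides, the factor $h|Dh|^{q-(n+1)}/\kappa$ appears in both integrands: on the left as $|Dh|^{\alpha+2}(h/\kappa)=|Dh|^{2}\cdot(h|Dh|^{q-(n+1)}/\kappa)$, and on the right as $(h|Dh|^{q-(n+1)}/\kappa)\cdot h(\Delta h+nh)$. Applying the lower bound $m$ to the left integral and the upper bound $M$ to the right integral then yields $nm\int_{\sn}|Dh|^{2}\,d\sigma\le M\int_{\sn}h(\Delta h+nh)\,d\sigma$, which is the proposition. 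There is no real obstacle here: the hypothesis on $q$ is engineered precisely so that the sign of $\alpha(\alpha+4)/4$ aligns with the non-negativity of $\langle\nabla h,\nabla|X|\rangle$, and the rest is bookkeeping between $d\sigma$ and $dV_n$.
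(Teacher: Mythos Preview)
Your proof is correct and follows essentially the same line as the paper's: set $\alpha=q-(n+1)$ in Lemma~\ref{IN}, use $\alpha\in[-4,0]$ together with $\langle\nabla h,\nabla|X|\rangle\ge 0$ to drop the third term, and then sandwich by $m,M$. You actually supply more detail than the paper does---in particular the explicit odd/even argument for $\int_{\sn}|X|^{\alpha/2}X\,dV_n=0$, which the paper leaves implicit in the origin-symmetry hypothesis. One tiny point worth stating for completeness: when you replace the weight by $M$ on the right, you are using $h(\Delta h+nh)=h\,\sigma_1>0$, which holds by strict convexity; this is also unsaid in the paper.
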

\begin{proof}
Let $\alpha: =q-(n+1)$ in Lemma \ref{IN}. It follows that $\frac{\alpha^{2}}{4}+\alpha\leq 0$ for $n-3\leq q \leq n+1$. Since $|X|\langle \nabla h, \nabla |X|\rangle=\sum_{i}\lambda_{i}h^{2}_{i}\geq c_{0} |\nabla h|^{2}$ where $c_{0}>0$ depends on $\partial K$. Set $dC_{q}=h |Dh|^{q-(n+1)}\frac{1}{\kappa}d\sigma$, thus we have
\[
n\int_{\sn}|Dh|^{2}dC_{q}\leq \int_{\sn}h(\Delta h+nh)dC_{q}.
\]
Based on the assumption, we get
\[
mn\int_{\sn}|Dh|^{2}d\sigma\leq  M\int_{\sn}h(\Delta h+nh)d\sigma.
\]
The proof is complete.
\end{proof}
  Drawing inspiration from \cite{HI25}, we are in a position to prove Theorem \ref{main} via Proposition \ref{ab}.

\begin{proof}[Proof of Theorem \ref{main}.]

Set
\[
M=\max_{\sn} \left(|Dh|^{q-(n+1)}\frac{h}{\kappa}\right), \ \ \   m=\min_{\sn} \left(|Dh|^{q-(n+1)}\frac{h}{\kappa}\right), \ \ \ \varepsilon:=\frac{M}{m}-1.
\]
In view of Proposition \ref{ab}, integration of parts yields
\begin{equation}\label{cv}
(n+1+\varepsilon)\int_{\sn}|\nabla h|^{2}d\sigma\leq n\varepsilon \int_{\sn}h^{2}d\sigma.
\end{equation}
Applying the Poincar\'{e} inequality on $\sn$ to $h$, we obtain
\begin{equation}\label{cv2}
n\int_{\sn}\left(h-\frac{\int_{\sn} h d\sigma}{\int_{\sn}d\sigma} \right)^{2}d\sigma  \leq \int_{\sn}|\nabla h|^{2}d\sigma.
\end{equation}
Combining \eqref{cv} and \eqref{cv2}, it yields
\[
\frac{\int_{\sn}\left( h-\frac{\int hd \sigma }{\int_{\sn}d\sigma} \right)^{2}d\sigma}{\int_{\sn}d\sigma}\leq \frac{\varepsilon}{n+1+\varepsilon}\frac{\int_{\sn} h^{2}d\sigma}{\int_{\sn}d\sigma} \leq \frac{\varepsilon}{n+1}\frac{\int_{\sn} h^{2}d\sigma}{\int_{\sn}d\sigma}.
\]
It follows that
\begin{equation}\label{dd}
\frac{\int_{\sn}\left(\frac{h}{\int_{\sn} hd\sigma/\int_{\sn}d\sigma} -1\right)^{2}d\sigma}{\int_{\sn}d\sigma}\leq \frac{\varepsilon}{n+1}\frac{\frac{\int_{\sn} h^{2}d\sigma}{\int_{\sn}d\sigma}}{(\int_{\sn} hd\sigma/\int_{\sn}d\sigma)^{2}}.
\end{equation}
We now show that the right-hand side of \eqref{dd} is bounded. Let $R_{h}=\max_{\sn}h$. Assume that $R_{h}=h(w)$ for a unit vector $w\in \sn$. By the convexity of hypersurface, for any $x\in \sn$, we have
\[
h(x)\geq \langle x, w\rangle R_{h}.
\]
Building on this fact, then there exists a positive constant $c_{1}$, depending on $n$, to satisfy
\begin{equation}\label{io}
\frac{\int_{\sn} hd\sigma}{\int_{\sn}d\sigma}\geq \frac{1}{\int_{\sn}d\sigma}\int_{\langle x,w\rangle\geq 1/2}hd\sigma\geq \frac{R_{h}}{2\int_{\sn}d\sigma}\int_{\langle x,w\rangle\geq 1/2}d\sigma\geq c_{1}R_{h},
\end{equation}
then it suffices to get
\begin{equation}\label{bbc}
\left( \frac{\int_{\sn}h^{2}d\sigma}{\int_{\sn}d\sigma} \right)^{\frac{1}{2}}\leq R_{h}\leq \frac{1}{c_{1}}\frac{\int_{\sn} hd\sigma}{\int_{\sn}d\sigma}.
\end{equation}
Substituting \eqref{bbc} into \eqref{dd}, we obtain
\[
\frac{\int_{\sn}\left(\frac{h}{\int_{\sn} hd\sigma/\int_{\sn}d\sigma} -1\right)^{2}d\sigma}{\int_{\sn}d\sigma}\leq \frac{\varepsilon}{(n+1)c^{2}_{1}}.
\]
Therefore the proof is finished.
\end{proof}

\section{Existence and uniqueness of solutions to the dual Minkowski problem}
\label{Sec4}

We first need to obtain the a priori estimates of solutions to \eqref{Mong} with the aid of Theorem \ref{main}.

\begin{lem}\cite[Lemma 7.6.4]{S14}\label{main2}
Let $K_{1}, K_{2}$ be two convex bodies in $\rnnn$. Then the following fact holds:
\begin{equation}\label{Sch}
\delta_{2}(K_{1},K_{2})^{2}\geq \alpha_{n} {\rm diam}(K_{1}\cup K_{2})^{-n}\delta_{H}(K_1,K_2)^{n+2},
\end{equation}
where $\alpha_{n}$ is a dimensional constant and ${\rm diam}(K_{1}\cup K_{2})$ is the diameter of the set $K_{1}\cup K_{2}$.
\end{lem}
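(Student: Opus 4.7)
The plan is to convert the pointwise deviation captured by $\delta_H$ into the integral deviation captured by $\delta_2$ via the Lipschitz continuity of support functions on $\sn$. Set $D:=\delta_H(K_1,K_2)$ and $d:=\DIAM(K_1\cup K_2)$, and let $x_0\in\sn$ be a point where $|h_{K_1}(x_0)-h_{K_2}(x_0)|=D$. Since both sides of the claimed inequality are invariant under a simultaneous translation of $K_1$ and $K_2$ (support functions change by the same linear functional on $\sn$, so $h_{K_1}-h_{K_2}$ is unaffected, and the diameter of the union is translation invariant), I would first translate so that the origin lies in $K_1\cup K_2$; this forces $\max_{y\in K_i}|y|\leq d$ for $i=1,2$.

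The next step is a standard Lipschitz bound. If $y_u\in K_i$ realises the maximum defining $h_{K_i}(u)$, then
\begin{equation*}
h_{K_i}(u)-h_{K_i}(v)\leq\langle u-v,y_u\rangle\leq|u-v|\max_{y\in K_i}|y|\leq d|u-v|,
\end{equation*}
so that $f:=h_{K_1}-h_{K_2}$ is $2d$-Lipschitz on $\sn$. Consequently, on the spherical cap $C:=\{x\in\sn:|x-x_0|\leq D/(4d)\}$ one has the pointwise lower bound $|f(x)|\geq D/2$.

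Finally I would integrate. Noting that $D\leq d$ (because $\delta_H(K_1,K_2)\leq\DIAM(K_1\cup K_2)$ for any two convex bodies, as any point of one body is within distance $d$ from any point of the other), the Euclidean radius $D/(4d)$ is at most $1/4$, and the spherical $n$-measure of $C$ is bounded below by $c_n (D/(4d))^n$ for a dimensional constant $c_n>0$. Therefore
\begin{equation*}
\int_{\sn}f^{2}\,d\sigma\;\geq\;c_n\left(\frac{D}{4d}\right)^{\!n}\!\left(\frac{D}{2}\right)^{\!2}\;\geq\;\tilde c_n\,\frac{D^{n+2}}{d^{n}},
\end{equation*}
and dividing by $\int_{\sn}d\sigma$ yields the conclusion with an explicit dimensional constant $\alpha_n$.

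The main obstacle is essentially bookkeeping: verifying the translation invariance of both sides, choosing the translation so that the Lipschitz constant of $h_{K_i}$ really is controlled by $d$, and extracting the spherical-cap volume constant in a form that is uniform in $D/d\in(0,1]$. Conceptually the argument is just the Lipschitz-to-$L^{2}$ conversion applied to $h_{K_1}-h_{K_2}$; no convex-geometric input beyond support-function Lipschitzness is used.
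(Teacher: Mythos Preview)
The paper does not supply its own proof of this lemma; it is simply quoted from Schneider's book \cite[Lemma 7.6.4]{S14} and used as a black box. Your argument is correct and is in fact the standard one: realise $\delta_H$ at some $x_0\in\sn$, exploit the Lipschitz bound $|h_{K_i}(u)-h_{K_i}(v)|\le d\,|u-v|$ (after translating so the origin lies in $K_1\cup K_2$), deduce that $|h_{K_1}-h_{K_2}|\ge D/2$ on a spherical cap of Euclidean radius $\sim D/d$ about $x_0$, and integrate. The only cosmetic point is your justification of $D\le d$: the phrase ``any point of one body is within distance $d$ from any point of the other'' is literally the definition of $d=\DIAM(K_1\cup K_2)$, and the cleaner route is the one you already implicitly use, namely $h_{K_1}(x)-h_{K_2}(x)\le \langle x, y^{*}-z\rangle\le |y^{*}-z|\le d$ for $y^{*}\in K_1$ attaining $h_{K_1}(x)$ and any $z\in K_2$. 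With that adjustment the write-up is complete and matches Schneider's approach.
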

Based on Lemma \ref{main2} and Theorem \ref{main}, we derive the $C^{0}$ estimate as follows.
\begin{lem}\label{co}

Suppose that $q$ satisfies either $ 0<q\leq n+1$ if $ \ 1\leq n\leq 3$,  or $ n-3\leq q\leq n+1$ if $ n>3$. Assume that $K$ is a smooth, origin-symmetric, strictly convex body such that
\begin{equation}\label{Du}
1-\varepsilon\leq |Dh|^{q-(n+1)}\frac{h}{\kappa}\leq 1+\varepsilon
\end{equation}
for some $\varepsilon\in (0,\varepsilon_{0})$ with $\varepsilon_{0}>0$, then there exists $C=C(\varepsilon_{0},n)>0$ such that
\begin{equation}\label{hco}
1/C\leq h(x)\leq C, \quad \forall x\in \sn,
\end{equation}
and
\begin{equation}\label{pco}
1/C\leq \rho(u)\leq C, \quad \forall u\in \sn,
\end{equation}
where $h(x)$ and $\rho(u)$ are the support function and the radial function of $K$ respectively.
\end{lem}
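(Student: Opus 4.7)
Our plan is to combine the $L^{2}$-stability from Theorem \ref{main} with the geometric interpolation inequality in Lemma \ref{main2}, and then pin down the scaling of $K$ using an integral identity coming from the divergence theorem. Throughout set $\bar c:=\int_{\sn}h\,d\sigma\big/\int_{\sn}d\sigma$, so that $\bar K=K/\bar c$ has support function $h_{\bar K}=h/\bar c$. The argument splits naturally into three steps: produce an $L^{2}$-bound for $\bar K$ against $B_{1}$, upgrade it to a Hausdorff bound, and finally fix the normalizing constant $\bar c$.

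The hypothesis \eqref{Du} yields $\max_{\sn}(|Dh|^{q-(n+1)}h/\kappa)/\min_{\sn}(|Dh|^{q-(n+1)}h/\kappa)\leq (1+\varepsilon)/(1-\varepsilon)$. Since the admissible range of $(n,q)$ here lies in $n-3\leq q\leq n+1$, Theorem \ref{main} applies and gives $\delta_{2}(\bar K,B_{1})\leq C_{1}\sqrt{\varepsilon}$ for a dimensional constant $C_{1}$. To then invoke Lemma \ref{main2} we need a diameter bound on $\bar K\cup B_{1}$, and here origin-symmetry enters crucially: if $R=\max_{\sn}h=h(w)$, then $[-Rw,Rw]\subset K$, whence $h(x)\geq R|\langle x,w\rangle|$ on $\sn$, and integrating yields $\bar c\geq c_{n}R$ for a dimensional $c_{n}>0$ (precisely the estimate leading to \eqref{io}). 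Consequently $\max h_{\bar K}\leq 1/c_{n}$, $\DIAM(\bar K\cup B_{1})\leq 2/c_{n}$, and Lemma \ref{main2} gives $\delta_{H}(\bar K,B_{1})\leq C_{2}\varepsilon^{1/(n+2)}$. For $\varepsilon_{0}=\varepsilon_{0}(n)$ small enough this forces $1/2\leq h_{\bar K}\leq 3/2$ on $\sn$, equivalently $\bar c/2\leq h\leq 3\bar c/2$; and since $B_{1/2}\subset\bar K\subset B_{3/2}$ we also get $\bar c/2\leq\rho_{K}\leq 3\bar c/2$.

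It remains to control $\bar c$. Applying the divergence theorem to the vector field $X\mapsto |X|^{q-(n+1)}X$ on $K$ together with the Gauss-map push-forward of $d\mathcal{H}^{n}$ (which on $\sn$ equals $\sigma_{n}\,d\sigma$) and polar coordinates yields, using $q\neq 0$, the classical identity
\[
\int_{\sn}h\,|Dh|^{q-(n+1)}\sigma_{n}\,d\sigma=\int_{\sn}\rho_{K}^{\,q}\,du.
\]
Integrating \eqref{Du} against $d\sigma$ over $\sn$ and using $\rho_{K}=\bar c\,\rho_{\bar K}$ gives
\[
(1-\varepsilon)\int_{\sn}d\sigma\ \leq\ \bar c^{\,q}\!\int_{\sn}\rho_{\bar K}^{\,q}\,du\ \leq\ (1+\varepsilon)\int_{\sn}d\sigma.
\]
By the previous step $\rho_{\bar K}\in[1/2,3/2]$, so the middle integral is squeezed between dimensional constants. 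Since $q>0$ in the stated range, $\bar c^{\,q}$ and therefore $\bar c$ itself are bounded above and below by constants depending only on $n$ and $\varepsilon_{0}$. Combined with $\bar c/2\leq h,\rho_{K}\leq 3\bar c/2$ from the previous step, this yields the desired bounds \eqref{hco} and \eqref{pco}.

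The main obstacle is the passage from the $L^{2}$-stability of Theorem \ref{main} to a Hausdorff (hence pointwise) bound, because Lemma \ref{main2} is useless without an a priori control on $\DIAM(\bar K\cup B_{1})$. The origin-symmetry hypothesis supplies exactly this via $\bar c\geq c_{n}\max_{\sn}h$, a distinctly symmetric phenomenon; without symmetry the centroid could be arbitrarily far from the origin and a different normalization would be required. A secondary point is that the divergence-theorem identity that pins down the scaling relies on $q\neq 0$, which is automatic in the range of $q$ considered here.
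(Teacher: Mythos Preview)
Your argument is correct and follows essentially the same route as the paper: apply Theorem~\ref{main} to get $\delta_{2}(\bar K,B_{1})$ small, use the symmetric estimate \eqref{io} to bound $\DIAM(\bar K\cup B_{1})$, upgrade to a Hausdorff bound via Lemma~\ref{main2}, and then fix the scale using the integral identity $\int_{\sn}h|Dh|^{q-(n+1)}\sigma_{n}\,d\sigma=\int_{\sn}\rho^{q}\,du$ together with $q>0$. The only cosmetic difference is that the paper first bounds the ratio $\max h/\min h$ and then pins down $\min h$ and $\max h$ separately, whereas you bound $h$ and $\rho$ directly in terms of $\bar c$ and then control $\bar c$; both require the same implicit smallness of $\varepsilon_{0}$.
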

\begin{proof}

Let $u$ and $x$ be related by $\rho(u)u=\nabla h(x)+h(x)x=Dh(x)$. Clearly, we have
\[
\min_{\sn}h(x)\leq \rho(u)\leq \max_{\sn}h(x).
\]
This implies that the validity of \eqref{hco} is equivalent  to that of \eqref{pco}. So we only need to establish \eqref{hco}.

From \eqref{io}, for $c_{1}>0$, we know that
\begin{equation*}\label{}
h_{\bar{K}}(x)=\frac{h(x)}{\int_{\sn} hd\sigma/\int_{\sn}d\sigma}\leq \frac{1}{c_{1}}, \quad \forall x\in \sn.
\end{equation*}
It follows that
\begin{equation}\label{az}
{\rm diam} (\bar{K}\cup B_{1})\leq 2\left( 1+\frac{1}{c_{1}} \right).
\end{equation}
By applying Theorem \ref{main}, we obtain
\begin{equation}\label{az2}
\delta_{2}(\bar{K},B_{1})\leq \beta \varepsilon^{\frac{1}{2}}_{0}.
\end{equation}
Combining \eqref{az} and \eqref{az2} with \eqref{Sch}, we conclude that there exists a constant $c_{2}>0$, depending only on $n$, such that
\begin{equation}\label{az3}
\delta_{H}(\bar{K},B_{1})\leq \alpha^{-\frac{1}{n+2}}_{n}{\rm diam}(\bar{K}\cup B_{1})^{\frac{n}{n+2}}\delta_{2}(\bar{K}, B_{1})^{\frac{2}{n+2}}\leq c_{2}\varepsilon^{\frac{1}{n+2}}_{0}.
\end{equation}
Eq. \eqref{az3} gives
\[
1-c_{2}\varepsilon^{\frac{1}{n+2}}_{0}\leq \frac{h}{\int_{\sn}hd\sigma/\int_{\sn}d\sigma}\leq 1+c_{2}\varepsilon^{\frac{1}{n+2}}_{0}.
\]
Furthermore, for $\varepsilon_{0}$ equipped with $c_{2}\varepsilon^{\frac{1}{n+2}}_{0}<1$, there is
\begin{equation}\label{radio}
\frac{\max_{\sn} h}{\min_{\sn} h}\leq \frac{1+c_{2}\varepsilon^{\frac{1}{n+2}}_{0}}{1-c_{2}\varepsilon^{\frac{1}{n+2}}_{0}}.
\end{equation}
Now, employing \eqref{Du}, for $q>0$, we obtain
\begin{equation}\label{}
(\min_{\sn} h)^{q}\int_{\sn}d\sigma \leq \int_{\sn}|Dh|^{q-(n+1)}\frac{h}{\kappa}d\sigma\leq (1+\varepsilon_{0})\int_{\sn}d\sigma.
\end{equation}
This illustrates that there exists a constant $C_{1}>0$ such that
\begin{equation}\label{infi}
\min_{\sn} h\leq C_{1}.
\end{equation}
Substituting \eqref{infi} into \eqref{radio}, then there is a positive constant $C_{2}>0$ depending on $n, \varepsilon_{0}$ so that
\[
\max_{\sn} h\leq C_{2}.
\]
 Similarly, for $q>0$,
\begin{equation}\label{}
(\max_{\sn} h)^{q}\int_{\sn}d\sigma \geq \int_{\sn}|Dh|^{q-(n+1)}\frac{h}{\kappa}d\sigma\geq (1-\varepsilon_{0})\int_{\sn}d\sigma.
\end{equation}
It yields that for a positive constant $C_{3}>0$, there is
\begin{equation}\label{infi2}
\max_{\sn} h\geq C_{3}.
\end{equation}
Applying \eqref{infi2} into \eqref{radio},  we also have
\[
\min_{\sn}h\geq C_{4}
\]
for a positive constant $C_{4}>0$ depending on $n, \varepsilon_{0}$. This completes the proof.
\end{proof}
The $C^{1}$ estimate follows from the $C^{0}$ estimate above and the convexity of the hypersurface.
\begin{lem}\label{C1}
Suppose that $q$ satisfies either $0<q\leq n+1$ if $ \ 1\leq n\leq 3$,  or $ n-3\leq q\leq n+1$ if $ n>3$.  Let $\alpha\in (0,1)$.  Let $f$ be an even, smooth and positive function on $\sn$, and $K$ be a smooth, origin-symmetric and strictly convex solution to Eq. \eqref{Mong}. There exists a constant $\varepsilon_{0}>0$ depending only on $n$, $\alpha$ such that if $||f-1||_{C^{\alpha}}\leq \varepsilon$ for some $\varepsilon\in (0,\varepsilon_{0})$, then there is a constant $C>0$ depending on $\varepsilon_{0},n$, such that
\begin{equation*}\label{C12}
|\nabla h(x)|\leq  C, \quad \forall x\in \sn,
\end{equation*}
and
\begin{equation*}\label{C13}
|\nabla \rho(u)|\leq  C, \quad \forall u\in \sn.
\end{equation*}
\end{lem}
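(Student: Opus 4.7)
The $C^{1}$ estimate is a purely convex-geometric consequence of the $C^{0}$ bounds already obtained in Lemma~\ref{co}, combined with the identities of Section~\ref{Sec2}; no further analysis of the Monge--Amp\`ere equation \eqref{Mong} is required. The plan is to first derive \eqref{C12} from the decomposition $Dh(x)=\nabla h(x)+h(x)x$ of \eqref{hgra}. Since the spherical gradient $\nabla h(x)$ lies in $T_{x}\sn$, it is orthogonal to $x$, so by Pythagoras,
\begin{equation*}
|Dh(x)|^{2}=|\nabla h(x)|^{2}+h(x)^{2}.
\end{equation*}
On the other hand $Dh(x)=F(x)\in\partial K$ by \eqref{hf}, so $|Dh(x)|=|F(x)|=\rho\!\left(F(x)/|F(x)|\right)\le\max_{\sn}\rho$. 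The upper bound $\rho\le C$ from \eqref{pco} then yields $|\nabla h(x)|^{2}\le C^{2}-h(x)^{2}\le C^{2}$, which is \eqref{C12}.

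For \eqref{C13} I would run the same argument on the polar body. Since $K$ is origin-symmetric and Lemma~\ref{co} gives $1/C\le h_{K},\rho_{K}\le C$, the polar body $K^{*}=\{y\in\rnnn:\langle x,y\rangle\le 1\ \forall x\in K\}$ satisfies the classical identities $h_{K^{*}}=1/\rho_{K}$ and $\rho_{K^{*}}=1/h_{K}$, hence also $1/C\le h_{K^{*}},\rho_{K^{*}}\le C$. Applying the gradient bound of the previous paragraph to $K^{*}$ gives $|\nabla h_{K^{*}}|\le C$. Since $h_{K^{*}}=1/\rho_{K}$ implies $\nabla h_{K^{*}}=-\rho_{K}^{-2}\nabla\rho_{K}$, we conclude
\begin{equation*}
|\nabla\rho_{K}(u)|=\rho_{K}(u)^{2}\,|\nabla h_{K^{*}}(u)|\le C^{3}, \qquad\forall\,u\in\sn,
\end{equation*}
which is \eqref{C13}. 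If one prefers to avoid polarity, a direct alternative is to differentiate the radial parametrization $u\mapsto\rho(u)u$ of $\partial K$ along any tangent vector $v\in T_{u}\sn$ and use that the resulting tangent vector to $\partial K$ is orthogonal to the outer normal $x=\nu_{K}(\rho(u)u)$; this produces the pointwise identity $\nabla\rho(u)\cdot v=-\rho(u)^{2}\langle v,x\rangle/h(x)$ and hence $|\nabla\rho(u)|\le\rho(u)^{2}/h(x)$, bounded by $C^{3}$ via Lemma~\ref{co}.

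There is no substantive obstacle here: all the serious work, including the use of the hypothesis $\|f-1\|_{C^{\alpha}}\le\varepsilon$ and the stability estimate of Theorem~\ref{main}, has already been absorbed into the $C^{0}$ bounds of Lemma~\ref{co}, and the $C^{1}$ estimate is then a consequence of the fact that a convex body whose support and radial functions are both pinched between $1/C$ and $C$ has automatically Lipschitz boundary data.
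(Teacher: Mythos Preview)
Your proof is correct and follows essentially the same route as the paper. The paper records the two identities $\rho^{2}=h^{2}+|\nabla h|^{2}$ and $h=\rho^{2}/\sqrt{|\nabla\rho|^{2}+\rho^{2}}$ (the latter being exactly your ``direct alternative'' bound $|\nabla\rho|\le\rho^{2}/h$) and then appeals to Lemma~\ref{co}; your derivation of \eqref{C12} is identical, and for \eqref{C13} your polarity argument is a mild detour that lands on the same estimate, while your second option coincides with the paper's.
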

\begin{proof}
Due to $\rho(u)u=\nabla h(x)+h(x)x=Dh(x)$, we have
\[
h=\frac{\rho^{2}}{\sqrt{|\nabla \rho|^{2}+\rho^{2}}}, \quad \rho^{2}=h^{2}+|\nabla h|^{2}.
\]
Hence, combining the above facts with Lemma \ref{co}, we obtain the desired result.
\end{proof}

Next, our goal is to obtain the $C^{2}$ estimate of  solutions to \eqref{Mong}.

\begin{lem}\label{C2}
Suppose that $q$ satisfies either $ 0<q\leq n+1$ if $ \ 1\leq n\leq 3$,  or $ n-3\leq q\leq n+1$ if $ n>3$.  Let $\alpha\in (0,1)$.  Let $f$ be an even, smooth and positive function on $\sn$, and $K$ be a smooth, origin-symmetric and strictly convex solution to \eqref{Mong}. There exists a constant $\varepsilon_{0}>0$ depending only on $n$, $\alpha$ such that if $||f-1||_{C^{\alpha}}\leq \varepsilon$ for some $\varepsilon\in (0,\varepsilon_{0})$, then for some positive constant $C$ depending on $\varepsilon_{0}, n$, the principal curvatures $\kappa_{1},\ldots, \kappa_{n}$ of $\partial K$ satisfy
\begin{equation*}
1/C\leq \kappa_{i}(x)\leq C, \quad \forall x\in \sn, \ i=1,\ldots,n.
\end{equation*}

\end{lem}
\begin{proof}
The proof is divided into two parts, in the first part, we derive an upper bound for the Gauss curvature $\kappa(x)$. In the second part, we derive an upper bound for the principal radii of curvature $b_{ij}=h_{ij}+h\delta_{ij}$.

Step I: Based on the assumption, for some $\varepsilon\in (0,\varepsilon_{0})$ with $\varepsilon_{0}>0$, we have
\begin{equation}\label{Du2}
1-\varepsilon\leq \rho^{q-(n+1)}\frac{h}{\kappa}\leq 1+\varepsilon.
\end{equation}
By  Lemma \ref{co}, and using \eqref{Du2}, we have
\begin{equation}\label{}
\kappa\leq \rho^{q-(n+1)}h\frac{1}{1-\varepsilon_{0}}\leq C_{0}
\end{equation}
for a positive constant $C_{0}$.

Step II: Set the auxiliary function as
\begin{equation*}
Q(x)={\rm log}{\rm tr}(\{b_{ij}\})-A{\rm log}h+B|\nabla h|^{2},
\end{equation*}
where ${\rm tr}(\{b_{ij}\})$ is the sum of the eigenvalues of matrix $\{b_{ij}\}$, $A$ and $B$ are positive constants to be chosen later. Assume $\max_{\sn}Q(x)$ is attained at a point $x_{0}\in \sn$. By a rotation, we may assume $\{b_{ij}\}(x_{0})$ is diagonal.
  Then we have at $x_{0}$,
\begin{equation}
\begin{split}
\label{wq}
0=\nabla_{i}Q&=\frac{1}{\sum_{j}b_{jj}}\sum_{j}\nabla_{i}b_{jj}-A\frac{h_{i}}{h}+2B\sum_{k}h_{k}h_{ki}\\
&= \frac{1}{\sum_{j}b_{jj}}\sum_{j}(h_{jji}+h_{i})-A\frac{h_{i}}{h}+2Bh_{i}h_{ii}\\
&= \frac{1}{\sum_{j}b_{jj}}\sum_{j}(h_{ijj}+h_{j}\delta_{ij}-h_{i})+\frac{1}{\sum_{j}b_{jj}}\sum_{j}h_{i}-A\frac{h_{i}}{h}+2Bh_{i}h_{ii}\\
&=\frac{1}{\sum_{j}b_{jj}}\sum_{j}(h_{ijj}+h_{j}\delta_{ij})-A\frac{h_{i}}{h}+2Bh_{i}h_{ii},
\end{split}
\end{equation}
where we used the fact that $h_{ijk}-h_{ikj}=h_{j}\delta_{ik}-h_{k}\delta_{ij}$, and there holds
\begin{equation*}
\begin{split}
&0\geq \nabla_{ii}Q=\frac{1}{\sum_{j}b_{jj}}\sum_{j}\nabla_{ii}b_{jj}-\frac{1}{(\sum_{j}b_{jj})^{2}}(\sum_{j}\nabla_{i}b_{jj})^{2}-A\left( \frac{h_{ii}}{h} -\frac{h^{2}_{i}}{h^{2}}\right)+2B\left( \sum_{k}h_{k}h_{kii}+h^{2}_{ii} \right).
\end{split}
\end{equation*}
At $x_{0}$, we also have
\begin{equation}
\begin{split}
\label{II}
0&\geq b^{ij}Q_{ij}\\
&=\sum_{i}b^{ii}\frac{1}{\sum_{j}b_{jj}}\sum_{j}\nabla_{ii}b_{jj}-\sum_{i}\frac{1}{(\sum_{j}b_{jj})^{2}}b^{ii}(\sum_{j}\nabla_{i}b_{jj})^{2}-A\sum_{i}b^{ii}\left( \frac{h_{ii}}{h} -\frac{h^{2}_{i}}{h^{2}}\right)\\
&\quad+2B\sum_{i}b^{ii}\sum_{k}h_{k}h_{kii}+2B\sum_{i}b^{ii}h^{2}_{ii}\\
&\geq \sum_{i}b^{ii}\frac{1}{\sum_{j}b_{jj}}\sum_{j}\nabla_{ii}b_{jj}-\sum_{i}\frac{1}{(\sum_{j}b_{jj})^{2}}b^{ii}(\sum_{j}\nabla_{i}b_{jj})^{2}-A\sum_{i}\frac{h_{ii}+h}{h}b^{ii}+A\sum_{i}b^{ii}\\
&\quad+2B\sum_{i}b^{ii}\sum_{k}h_{k}h_{kii}+2B\sum_{i}b^{ii}(b_{ii}-h)^{2}.
\end{split}
\end{equation}
The Ricci identity on sphere reads
\begin{equation*}
\nabla_{kk}b_{ij}=\nabla_{ij}b_{kk}-\delta_{ij}b_{kk}+\delta_{kk}b_{ij}-\delta_{ik}b_{jk}+\delta_{jk}b_{ik}.
\end{equation*}
Then \eqref{II} becomes
\begin{equation}
\begin{split}
\label{I2}
0&\geq \sum_{i}\frac{1}{\sum_{j}b_{jj}}b^{ii}\sum_{j}(\nabla_{jj} b_{ii}+b_{jj}-b_{ii})-\sum_{i}\frac{1}{(\sum_{j}b_{jj})^{2}}b^{ii}(\sum_{j}\nabla_{i}b_{jj})^{2}-\frac{nA}{h}+A\sum_{i}b^{ii}\\
&\quad +2B\sum_{k}h_{k}\sum_{i}b^{ii}h_{kii}+2B\sum_{i}b_{ii}-4nBh.
\end{split}
\end{equation}
Since
\begin{equation}\label{rt}
\log h=-\log {\rm det}(\nabla^{2}h+hI)+\log (f\rho^{n+1-q}).
\end{equation}
Set $\Phi:=\log (f\rho^{n+1-q})$. Differentiating \eqref{rt}, at $x_{0}$, it gives
\begin{equation}
\begin{split}
\label{tr}
\frac{h_{j}}{h}&=-\sum_{i,k}b^{ik}\nabla_{j}b_{ik}+\nabla_{j}\Phi\\
&=-\sum_{i,k}b^{ik}(h_{ikj}+h_{j}\delta_{ik})+\nabla_{j}\Phi\\
&=-\sum_{i}b^{ii}h_{iij}-h_{j}\sum_{i}b^{ii}+\nabla_{j}\Phi\\
&=-\sum_{i}b^{ii}(h_{iji}+h_{i}\delta_{ij}-h_{j})-h_{j}\sum_{i}b^{ii}+\nabla_{j}\Phi\\
&=-\sum_{i}b^{ii}(h_{jii}+h_{i}\delta_{ij})+\nabla_{j}\Phi,
\end{split}
\end{equation}
where $b^{ij}$ is the inverse of $b_{ij}$, and
\begin{equation}\label{re}
\frac{h_{jj}}{h}-\frac{h^{2}_{j}}{h^{2}}=-\sum_{i}b^{ii}\nabla_{jj}b_{ii}+\sum_{i,k}b^{ii}b^{kk}(\nabla_{j}b_{ik})^{2}+\nabla_{jj}\Phi.
\end{equation}
Besides, for each $i$, there is
\begin{equation}
\begin{split}
\label{AMI}
&\sum_{j}b_{jj}\sum_{j,k}b^{kk}(\nabla_{j}b_{ik})^{2}\\
&\geq \sum_{j}b_{jj}\sum_{j}b^{jj}(\nabla_{i}b_{jj})^{2}\\
&\geq \left(\sum_{j}\sqrt{b_{jj}b^{jj}(\nabla_{i}b_{jj})^{2}}\right)^{2}\\
&=(\sum_{j}|\nabla_{i}b_{jj}|)^{2}\\
&\geq (\sum_{j}\nabla_{i}b_{jj})^{2}.
\end{split}
\end{equation}
Employing \eqref{AMI}, one sees
\begin{equation}\label{AM2}
\sum_{i}b^{ii}\frac{1}{(\sum_{j}b_{jj})^{2}}(\sum_{j}\nabla_{i}b_{jj})^{2}-\frac{1}{\sum_{j}b_{jj}}\sum_{i,j,k}b^{ii}b^{kk}(\nabla_{j}b_{ik})^{2}\leq 0.
\end{equation}
Now, substituting \eqref{re} into \eqref{I2}, employing \eqref{tr} and \eqref{AM2}, we get
\begin{equation}
\begin{split}
\label{po}
0&\geq \frac{1}{\sum_{j}b_{jj}}\sum_{j}\left(-\frac{h_{jj}}{h}+\frac{h^{2}_{j}}{h^{2}} \right)+\frac{1}{\sum_{j}b_{jj}}\sum_{i,j,k}b^{ii}b^{kk}(\nabla_{j}b_{ik})^{2}+\frac{1}{\sum_{j}b_{jj}}\sum_{j}\nabla_{jj}\Phi -\frac{n^{2}}{\sum_{j}b_{jj}} \\ &\quad-\sum_{i}\frac{1}{(\sum_{j}b_{jj})^{2}}b^{ii}(\sum_{j}\nabla_{i}b_{jj})^{2}
-\frac{nA}{h}+A\sum_{i}b^{ii}+2B\sum_{k}h_{k}\sum_{i}b^{ii}h_{kii}+2B\sum_{i}b_{ii}-4nBh\\
&\geq -\frac{1}{h}-\frac{nA}{h}+\frac{1}{\sum_{j}b_{jj}}\sum_{j}\nabla_{jj}\Phi+A\sum_{i}b^{ii}+2B\sum_{k}h_{k}\left( -\frac{h_{k}}{h} -b^{kk}h_{k}+\nabla_{k}\Phi\right)\\
& \quad+2B\sum_{i}b_{ii}-\frac{n^{2}}{\sum_{j}b_{jj}}-4nBh\\
& \geq -\frac{1}{h}-\frac{nA}{h}-\frac{2B|\nabla h|^{2}}{h}+(A-2B|\nabla h|^{2})\sum_{i}b^{ii}+\frac{1}{\sum_{j}b_{jj}}\sum_{j}\nabla_{jj}\Phi\\
&\quad +2B\sum_{k}h_{k}\nabla_{k}\Phi+2B\sum_{i}b_{ii}-\frac{n^{2}}{\sum_{j}b_{jj}}-4nBh.
\end{split}
\end{equation}
From $\Phi=\log(f\rho^{n+1-q})$, we get
\begin{equation}
\begin{split}
\label{q1}
2B\sum_{k}h_{k}\nabla_{k}\Phi=2B\sum_{k}h_{k}\left( \frac{f_{k}}{f}+(n+1-q)\frac{hh_{k}+h_{k}h_{kk}}{\rho^{2}}\right),
\end{split}
\end{equation}
and
\begin{equation}
\begin{split}
\label{q2}
\frac{1}{\sum_{j}b_{jj}}\sum_{j}\nabla_{jj}\Phi&=\frac{1}{\sum_{j}b_{jj}}\sum_{j}\left(\frac{ff_{jj}-f^{2}_{j}}{f^{2}} +(n+1-q)\frac{hh_{jj}+h^{2}_{j}+h^{2}_{jj}+\sum_{k}h_{k}h_{kjj}}{\rho^{2}} \right)\\
&\quad -2\frac{1}{\sum_{j}b_{jj}}(n+1-q)\sum_{j}\frac{(hh_{j}+h_{j}h_{jj})^{2}}{\rho^{4}}.
\end{split}
\end{equation}
Using \eqref{wq}, \eqref{q1}, \eqref{q2}, Lemmas \ref{co} and \ref{C1}, then we get
\begin{equation}
\begin{split}
\label{ws}
&2B\sum_{k}h_{k}\nabla_{k}\Phi+\frac{1}{\sum_{j}b_{jj}}\sum_{j}\nabla_{jj}\Phi\\
&\geq -C_{0}- C_{1}B+(n+1-q)\sum_{k}\frac{h_{k}}{\rho^{2}}\left[2Bh_{k}h_{kk}+\frac{1}{\sum_{j}b_{jj}}\sum_{j}h_{kjj}\right]-C_{2}\frac{1}{\sum_{j}b_{jj}}\\
&\quad -(n+1-q)\frac{1}{\sum_{j}b_{jj}}\sum_{j} \left[ \frac{h|h_{jj}|+h^{2}_{jj}}{\rho^{2}}+2\frac{h^{2}_{j}h^{2}_{jj}}{\rho^{4}} \right]\\
&=-C_{0}-C_{1}B+(n+1-q)\sum_{k}\frac{h_{k}}{\rho^{2}}\left(-\frac{1}{\sum_{j}b_{jj}}\sum_{j}h_{j}\delta_{kj}+A\frac{h_{k}}{h}\right)-C_{2}\frac{1}{\sum_{j}b_{jj}} \\
&\quad -(n+1-q)\frac{1}{\sum_{j}b_{jj}}\sum_{j}\frac{h|b_{jj}-h|+(b_{jj}-h)^{2}}{\rho^{2}}-2(n+1-q)\frac{1}{\sum_{j}b_{jj}}\sum_{j}\frac{h^{2}_{j}(b_{jj}-h)^{2}}{\rho^{4}}\\
&\geq- \tilde{C}_{0}-\tilde{C}_{1}A-\tilde{C}_{2}B-\tilde{C}_{3}\frac{1}{\sum_{j}b_{jj}}-(n+1-q)\frac{\rho^{2}+2|\nabla h|^{2}}{\rho^{4}}\sum_{j}b_{jj}
\end{split}
\end{equation}
for the positive constants $C_{0}, C_{1}, C_{2},\tilde{C}_{0}, \tilde{C}_{1}, \tilde{C}_{2}, \tilde{C}_{3}$ depending only on the constants from Lemmas \ref{co} and \ref{C1}. Now we take $A=2B \max_{\sn} |\nabla h|^{2}+1$, and
\[
B=(n+1-q)\frac{\max_{\sn} \rho^{2}+2\max_{\sn} |\nabla h|^{2}}{\min_{\sn} \rho^{4}}+1.
\]
Then applying \eqref{ws} into \eqref{po}, we obtain
\begin{equation*}
\begin{split}
0\geq -\frac{(n+1)A}{h}-\tilde{C}_{0}-\tilde{C}_{1}A-\tilde{C}_{2}B-(\tilde{C}_{3}+n^{2})\frac{1}{\sum_{j}b_{jj}}+B\sum_{i}b_{ii}-4nhB.
\end{split}
\end{equation*}
Thus when $\sum_{i}b_{ii}\gg1$, we get
\[
\sum_{i}b_{ii}\leq C
\]
for a positive constant $C$. The proof is complete.
\end{proof}
By the Evans-Krylov theorem and the Schauder regularity theory, together with the a priori estimates in Lemmas \ref{co}, \ref{C1} and \ref{C2}, we obtain the following theorem.
\begin{theo}\label{UI}
Suppose that $q$ satisfies either $ 0<q\leq n+1$ if $ \ 1\leq n\leq 3$,  or $ n-3\leq q\leq n+1$ if $ n>3$.  Let $\alpha\in (0,1)$. Let $f$ be an even, smooth and positive function on $\sn$, and $K$ be a smooth, origin-symmetric and strictly convex solution to \eqref{Mong}. There exists a constant $\varepsilon_{0}>0$ depending only on $n$, $\alpha$ such that if $||f-1||_{C^{\alpha}}\leq \varepsilon$ for some $\varepsilon\in (0,\varepsilon_{0})$,  then for any $\ell\geq 2$ and $\gamma\in (0,1)$, there is a constant $C>0$ depending on $\varepsilon_{0}, n$ such that
\begin{equation*}
||h||_{C^{\ell+1,\gamma}}\leq C.
\end{equation*}
\end{theo}
We are in a position to prove Theorem \ref{coro2}.

\begin{proof}[Proof of Theorem \ref{coro2}.]

We first demonstrate the existence part by employing degree-theoretic arguments. Let $T_{t}(\cdot): C^{4,\alpha}(\sn)\rightarrow C^{2,\alpha}(\sn)$ be a nonlinear differential operator, which is defined as
\begin{equation*}\label{}
T_{t}(h)=\det(\nabla^{2}h+hI)-h^{-1}(|\nabla h|^{2}+h^{2})^{\frac{n+1-q}{2}}f_{t}
\end{equation*}
for $t\in [0,1]$, where
\[
f_{t}=(1-t)+tf.
\]
For $R>0$ fixed, define $\mathcal{O}\subset C^{4,\alpha}(\sn)$ by
\[
\mathcal{O}=\{h\in C^{4,\alpha}(\sn): \ h(x)=h(-x), \ \forall x\in \sn, \ {\rm and} \ ||h||_{C^{4,\alpha}(\sn)}< R  \}.
\]
By Theorem \ref{UI}, one sees that $T_{t}(h)=0$ has no solution on $\partial \mathcal{O}$ if $R$ is sufficiently large. Therefore, the degree of $T_{t}$ is well defined (see, e.g. \cite[Section 2]{LY89} or \cite[Section 3]{GM06} ). Since degree is homotopic invariant,
\[
\deg(T_{0}, \mathcal{O}, 0)=\deg(T_{1}, \mathcal{O}, 0).
\]
On the one hand, at $t=0$, by Theorem \ref{ori}, $h=1$ is the unique solution of $\eqref{Mong}$ when $f=1$.  On the other hand, since $T$ is symmetric, it is clear to show that the linearized operator of $T_{0}$  at $h=1$ is
\[
L_{0}\eta=\Delta \eta+q\eta
\]
for even $\eta\in C^{2}(\sn)$, when $0<q< n$,
we know that $q$ is not an eigenvalue of $(-\Delta)$ on $\sn$; furthermore, for the case $q=n$,  if $L_{0}\eta=0$, then $\eta$ are linear functions of $\rnnn$,  i.e., $\eta\in {\rm Span}\{x_{1},\ldots, x_{n+1}\}$ that are odd, due to the evenness of $\eta$, one sees that $\eta=0$. Thus the linearized operator $L_{0}$ is invertible for $0<q\leq n$. Based on this fact, we compute the degree by means of formula
\[
{\rm deg}(T_{0},\mathcal{O},0)=\sum_{\mu_{j}>0}(-1)^{\zeta_{j}},
\]
where $\mu_{j}>0$ are the eigenvalues of the linearized operator of $T_{0}$ and $\zeta_{j}$ denotes its multiplicities. Since the eigenvalues of the Beltrami-Laplace operator $\Delta$ on $\sn$ are strictly less than $-n$, except for the first two eigenvalues $0$ and $-n$, it follows that for $0<q\leq n$, there is only one positive eigenvalue of $L_{0}$ with multiplicity 1, namely $\mu=q$. Consequently,
\[
\deg(T_{0}, \mathcal{O}, 0)=\deg(T_{1}, \mathcal{O}, 0)=-1\neq 0.
\]
Hence, there is an even solution to \eqref{Mong}. The regularity of $h$ follows directly from Theorem \ref{UI}.

The following lemma is essential for proving the uniqueness part.
\begin{lem}\label{PRE}
Suppose that $q$ satisfies either $ 0<q\leq n$ if $ \ 1\leq n\leq 3$,  or $ n-3\leq q\leq n$ if $ n>3$. Let $0<\alpha<1$. Let $f$ be an even, smooth and positive function on $\sn$. There exists a small constant $\varepsilon_{0}>0$ such that if $||f-1||_{C^{\alpha}}\leq\varepsilon_{0}$, $||h_{K}-1||_{\infty}\leq \varepsilon_{0}$ and $||h_{L}-1||_{\infty}\leq \varepsilon_{0}$, where $K$ and $L$ are smooth, origin-symmetric and strictly convex bodies satisfying Eq. \eqref{Mong}, then $K=L.$
\end{lem}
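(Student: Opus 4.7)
\emph{Proof proposal for Lemma \ref{PRE}.} The plan is the standard uniqueness-via-linearization argument: subtract the two PDEs satisfied by $h_K$ and $h_L$, rewrite the difference as a linear elliptic operator $\mathcal{L}$ acting on $\eta:=h_K-h_L$ (automatically even), and show that $\mathcal{L}$ is a small perturbation of the linearization at $h\equiv 1$, which is invertible on the even subspace in the admissible $q$-range. Let
\[
T(h):=h\det(\nabla^{2}h+hI)\,|Dh|^{q-(n+1)},
\]
so that $T(h_K)=T(h_L)=f$. Setting $h_s:=(1-s)h_L+sh_K$ and applying the fundamental theorem of calculus along $s\mapsto T(h_s)$ gives
\[
0=T(h_K)-T(h_L)=\mathcal{L}\eta,\qquad \mathcal{L}:=\int_{0}^{1}D_{h}T(h_{s})\,ds,
\]
a linear second-order elliptic operator whose coefficients depend on $\nabla^{2}h_{s}$, $\nabla h_{s}$, and $h_{s}$.

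A direct expansion at $h\equiv 1$ using $|Dh|^{2}=h^{2}+|\nabla h|^{2}$ and $\det(I+\varepsilon A)=1+\varepsilon\TR A+O(\varepsilon^{2})$ produces
\[
D_{h}T(1)\eta=\Delta\eta+q\eta=:L_{0}\eta,
\]
after collecting the contribution $\eta$ from the prefactor $h$, the contribution $\Delta\eta+n\eta$ from the determinant, and $(q-n-1)\eta$ from $|Dh|^{q-(n+1)}$. The eigenvalues of $-\Delta$ on $\sn$ are $k(k+n-1)$ for $k\in\N$, and on the even subspace only even $k$ appear, so the spectrum restricts to $\{0,\,2(n+1),\,4(n+3),\dots\}$. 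The hypothesis forces $0<q<n<2(n+1)$ (note $q\ge n-3\ge 1$ when $n>3$), hence $q$ avoids the even spectrum and $L_{0}:C^{2,\alpha}_{\mathrm{even}}(\sn)\to C^{\alpha}_{\mathrm{even}}(\sn)$ is an isomorphism.

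To transfer invertibility from $L_{0}$ to $\mathcal{L}$, I would first upgrade $C^{0}$ smallness of $h_K-1$ and $h_L-1$ to $C^{2,\alpha}$ smallness: Theorem \ref{UI} provides the uniform bound $\|h_K\|_{C^{\ell+1,\gamma}}+\|h_L\|_{C^{\ell+1,\gamma}}\le C$ for any $\ell\ge 2$, and interpolating against $\|h_K-1\|_{\infty},\|h_L-1\|_{\infty}\le\varepsilon_{0}$ yields $\|h_K-1\|_{C^{2,\alpha}}+\|h_L-1\|_{C^{2,\alpha}}\le C\varepsilon_{0}^{\theta}$ for some $\theta\in(0,1)$. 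Combined with $\|f-1\|_{C^{\alpha}}\le \varepsilon_{0}$, the coefficients of $\mathcal{L}$ then lie within $C\varepsilon_{0}^{\theta}$ of those of $L_{0}$ in $C^{\alpha}$, so openness of invertibility in $\mathcal{B}(C^{2,\alpha}_{\mathrm{even}},C^{\alpha}_{\mathrm{even}})$ guarantees that $\mathcal{L}$ itself is invertible for $\varepsilon_{0}$ small. Since $\eta$ is automatically even and $\mathcal{L}\eta=0$, this forces $\eta\equiv 0$, i.e., $K=L$.

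The main technical burden is the operator-norm continuity $\mathcal{L}\to L_{0}$: expanding $D_{h}T(h)\eta$ one must track how the cofactor matrix of $\nabla^{2}h+hI$ and the factor $|Dh|^{q-(n+1)}$ perturb in $C^{\alpha}$ as $h$ approaches $1$, although each ingredient depends smoothly on its arguments, so this is fairly routine. The spectral step and interpolation are standard. The admissible range $0<q<n$ is precisely what keeps $q$ strictly between $0$ and the second even eigenvalue $2(n+1)$ of $-\Delta$ on $\sn$, which is the property driving the whole argument.
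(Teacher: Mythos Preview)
Your proposal is correct and follows essentially the same approach as the paper: both upgrade the $C^{0}$-closeness of $h_{K},h_{L}$ to $1$ into $C^{2,\alpha}$-closeness (you by interpolation against the uniform higher-order bounds from Theorem~\ref{UI}, the paper by a Schauder estimate applied to $h_{K}-1$) and then exploit the invertibility of $L_{0}=\Delta+q$ on even functions for $0<q<n$. The only cosmetic difference is that the paper packages the final step as an invocation of the inverse function theorem, whereas you spell out $T(h_{K})-T(h_{L})=\mathcal{L}(h_{K}-h_{L})$ and perturb $L_{0}$ directly.
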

\begin{proof}
By Theorem \ref{UI}, we find
\[
||h_{K}||_{C^{2,\alpha}}\leq C_{0}, \quad {\rm and} \quad ||h_{L}||_{C^{2,\alpha}}\leq C_{0},
\]
where $C_{0}>0$ is a constant depending only on $\varepsilon_{0}$. Moreover,
\begin{equation*}\label{}
\Big|\Big| \frac{f\rho^{n+1-q}_{K}}{h_{K}}-1\Big|\Big|_{C^{\alpha}}\leq \hat{\varepsilon}_{0}\rightarrow 0 \quad as \quad \varepsilon_{0}\rightarrow 0,
\end{equation*}
and
\begin{equation*}\label{}
\Big|\Big| \frac{f\rho^{n+1-q}_{L}}{h_{L}}-1\Big|\Big|_{C^{\alpha}}\leq \hat{\varepsilon}_{0}\rightarrow 0 \quad as \quad \varepsilon_{0}\rightarrow 0.
\end{equation*}
It is clear to show
\begin{equation}
\begin{split}
\label{Mong2}
\frac{f\rho^{n+1-q}_{K}}{h_{K}}-1&=\det(\nabla^{2}h_{K}+h_{K}I)-\det(\nabla^{2}1+I)\\
&=\int^{1}_{0}\frac{d}{dt}\det(\nabla^{2}((1-t)+th_{K})+((1-t)+th_{K})I)dt\\
&=\sum^{n}_{i,j=1}\int^{1}_{0}U^{ij}_{t}dt\cdot ((h_{K}-1)_{ij}+(h_{K}-1)\delta_{ij})\\
&=\sum^{n}_{i,j=1}a_{ij}((h_{K}-1)_{ij}+(h_{K}-1)\delta_{ij}),
\end{split}
\end{equation}
where the coefficient $a_{ij}=\int^{1}_{0}U^{ij}_{t}dt$, and $U^{ij}_{t}$ is the cofactor matrix of
\[
\nabla^{2}((1-t)+th_{K})+((1-t)+th_{K})I.
\]
Since
\[
||h_{K}||_{C^{2,\alpha}}\leq C_{0},
\]
and there exists a positive constant $C_{1}>0$ such that,
\[
\frac{1}{C_{1}}I\leq \{a_{ij}\}\leq C_{1}I.
\]
This illustrates that $\eqref{Mong2}$ is uniformly elliptic. Applying the Schauder estimate (see, e.g. \cite[Chapter 6]{GT01}) to $(h_{K}-1)$, one sees that for a universal positive constant $C$,
\[
||h_{K}-1||_{C^{2,\alpha}}\leq C\left(||h_{K}-1||_{\infty}+\Big|\Big| \frac{f\rho^{n+1-q}_{K}}{h_{K}}-1\Big| \Big|_{C^{\alpha}}\right)\leq C(\varepsilon_{0}+\hat{\varepsilon}_{0}).
\]
Along the same argument, we also derive
\[
||h_{L}-1||_{C^{2,\alpha}}\leq C(\varepsilon_{0}+\hat{\varepsilon}_{0}).
\]
So $K, L$ lie in the $C^{2}$-neighbourhood of $B_{1}$. On the other hand, as mentioned above, the linearized operator $L_{0}$ of \eqref{Mong} at $h=1$ is invertible for $0<q\leq n$. Since $K$ and $L$ satisfy \eqref{Mong}, by means of the inverse function theorem, provided that $\varepsilon_{0}$ is sufficiently small, we have $K=L$.
\end{proof}
Now, utilizing Lemma \ref{PRE}, we verify the uniqueness part of Theorem \ref{coro2}. Assume, to the contrary, that there are two different solutions $K_{i}$ and $L_{i}$ for $i\in \N$, by Lemma \ref{PRE}, we conclude that at least one of them, say $K_{i}$, then there exist $f_{i}$ and $K_{i}$ such that
\[
h_{K_{i}}\det(\nabla^{2}h_{K_{i}}+h_{K_{i}}I)\rho^{q-(n+1)}_{K_{i}}=f_{i},
\]
and satisfy
\[
||h_{K_{i}}-1||_{\infty}>\varepsilon_{0}, \quad ||f_{i}-1||_{C^{\alpha}}\rightarrow 0 \quad as \ \ i\rightarrow \infty.
\]
Meanwhile, with the aid of  Theorem \ref{UI}, for $\ell\geq 2$ and $\gamma\in (0,1)$, one gets
\[
||h_{K_{i}}||_{C^{\ell+1,\gamma}}\leq C
\]
for some positive constant $C$ depending only on $\varepsilon_{0}, n$. Then by the Arzel\`a-Ascoli theorem, passing to a subsequence,  there exists a smooth, origin-symmetric and strictly convex body $\widetilde{K}$ such that $h_{K_{i}}\rightarrow h_{\widetilde{K}}$ in the $C^{\ell+1}$ norm as $i\rightarrow \infty$, and we have
\begin{equation}\label{dr}
h_{\widetilde{K}}|D h_{\widetilde{K}}|^{q-(n+1)}\det(\nabla^{2}h_{\widetilde{K}}+h_{\widetilde{K}}I)=1,
\end{equation}
equipped with
\begin{equation}\label{dr2}
||h_{\widetilde{K}}-1||_{\infty}\geq \varepsilon_{0}, \quad \varepsilon_{0}>0.
\end{equation}
However,  by the uniqueness of even solutions to the isotropic dual Minkowski problem shown in Theorem \ref{ori} with $p=0$, we know that \eqref{dr} only admits a solution $h_{\widetilde{K}}\equiv 1$, which contradicts to \eqref{dr2}.
This completes the proof of Theorem \ref{coro2}.
\end{proof}

\section*{Acknowledgment} The author would like to thank  Yong Huang and Mohammad N. Ivaki for their interest in this work and  their valuable insights. The author also thanks the referees for their careful reading and  thorough comments.

{\bf Conflict of interest:} The author declares that there is no conflict of interest.

{\bf Data availability:} No data was used for the research described in the article.

\end{document}